\newtheorem{theorem}{Theorem}[section]
\newtheorem{corollary}[theorem] {Corollary}
\newtheorem{lemma} [theorem]{Lemma}
\newtheorem{proposition}[theorem]{Proposition}
\newtheorem{remark}[theorem]{Remark}
\newcommand{\Z}{\mathbb{Z}}
\newcommand{\ds}{\displaystyle}
\newsavebox{\mybox}
\title{Trace and discriminant criteria for a matrix to be a sum of sixth and eighth powers of matrices}
\author{Rakesh Barai \& Anuradha S. Garge \\
\small 
Guru Nanak Khalsa College, \\
\small
Mumbai-400019, India. \\ \small \&\\
\small
Department of Mathematics, \\
\small 
University of Mumbai, Mumbai- 400098, India.} 
\date{}
\begin{document}
	\maketitle
	
	\begin{abstract}  
	  {\footnotesize  In this paper, we shall be considering the Waring's problem for matrices. One version of the problem
            involves writing an $n \times n$ matrix over a commutative ring $R$ with unity as a sum of $k$-th powers of
            matrices over $R.$ This study is motivated by the interesting results of Carlitz, Newman, Vaserstein, Griffin,
            Krusemeyer, Richman etc. obtained earlier in this direction. The results are for the case
            $n \geq k \geq 2$ in terms of the trace of the matrix.

            For $n < k,$ it was shown by Katre, Garge that it is enough to work with the special case $n = 2$ and $k \geq 3.$
            The cases $3 \leq k \leq 5$ and $k = 7$ were settled in earlier results. There was no case of a composite,
            non-prime-power $k$ occuring above. In this paper, we will find the trace criteria for a matrix to be a sum of sixth
            (a composite non-prime power) and eighth powers of matrices over a commutative ring $R$ with unity.

            An elegant discriminant criterion was obtained by Katre and Khule earlier in the special case
            of an order in an algebraic number field $\mathcal{O}.$ We will derive here similar discriminant criteria for
            every matrix over $\mathcal{O}$ to be a sum of sixth and eighth powers of matrices over $\mathcal{O}.$ 
		}
	\end{abstract}
{\small \textbf{Keywords:} Matrices, Trace, Waring's problem, order, discriminant}\\
 {\small {\bf 2000 Mathematics Subject Classification:} 11R04, 11R11, 11R29, 11C20, 15B33}
 \section{Introduction}
 Let $R$ be a commutative ring with unity. Let $M \in M_n(R)$ and $k \ge 1$ be an integer.
 We say that $M$ is a sum of $k^{th}$ powers of matrices over $R$ if there exist $M_i\in M_n(R)$ such that $\ds M=\sum_{i=1}^r M_i^k$.
 All the matrices in $M_n(R)$ do not have this property. So one needs to determine:
\begin{enumerate}
	\item necessary and sufficient conditions for a matrix to be a sum of $k^{th}$ powers of matrices and 
	\item the least value of $r,$ the number of matrices required to express $M$ as the sum of $k^{th}$ powers of matrices. 
\end{enumerate}
This is known as the classical Waring's problem for matrices after the name of British mathematician Edward Waring who was first to
describe such a problem for integers. This problem was studied by L. Carlitz, M. Newman over the integers, by M. Griffin and M. Krusemeyer
over fields. L. N. Vaserstein, D. R. Richman studied this problem over commutative rings and S. A. Katre
gave a remarkable discriminant criterion for an order $R$ in an algebraic number field.

The majority of these had given the criteria (and the number of matrices requires) for a matrix of size 2, to be a sum of squares of the matrices. M. Griffin and M. Krusemeyer had given the criteria for a matrix of size 2, 3 and 4 to be a sum of squares
(Refer \cite{gk}, Propositions 3.3,  3.4 and 3.5). It was L. N. Vaserstein, who first gave a trace criterion for a matrix of any size to be a sum of squares of matrices. We recall his theorem below. 
\begin{theorem}[\cite{lv}, Theorem 1]
  Let $R$ be a ring with unity. Then for any integer $n\ge 2$, a matrix $M \in M_n(R)$ is a sum of squares if and only if
  Tr\footnote{Tr $M$ denotes the trace of the matrix $M$.} $M$ is a sum of squares in the ring $R/2R$.
\end{theorem}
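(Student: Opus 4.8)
The plan is to prove the two implications separately. The forward implication is a short trace computation exploiting that $R/2R$ has characteristic $2$; the reverse one is assembled from a few explicit matrix identities together with a reduction to diagonal matrices.

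\emph{Forward implication.} Suppose $M=\sum_{\ell}A_{\ell}^{2}$ with $A_{\ell}=(a^{(\ell)}_{ij})$. For any $A=(a_{ij})$ we have $\operatorname{Tr}(A^{2})=\sum_{i}a_{ii}^{2}+\sum_{i<j}(a_{ij}a_{ji}+a_{ji}a_{ij})$, and in $R/2R$ the expansion $(x+y)^{2}=x^{2}+xy+yx+y^{2}$ rearranges to $xy+yx=(x+y)^{2}+x^{2}+y^{2}$; hence each $\operatorname{Tr}(A_{\ell}^{2})$, and so their sum $\operatorname{Tr}M$, is a sum of squares in $R/2R$. (When $R$ is commutative one only needs $xy+yx=2xy\equiv 0$.)

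\emph{Reverse implication.} Write $\Sigma\subseteq M_{n}(R)$ for the set of matrices that are sums of squares; it is closed under addition. I would first stock $\Sigma$ with members built from the matrix units $E_{ij}$: every idempotent lies in $\Sigma$ since it squares to itself; for $i\neq j$ one has $c(E_{ii}+E_{jj})=(E_{ij}+cE_{ji})^{2}$ and $-(E_{ii}+E_{jj})=(E_{ij}-E_{ji})^{2}$; and since $aE_{ij}+E_{ii}$ is idempotent while $-E_{ii}=E_{jj}^{2}+(E_{ij}-E_{ji})^{2}$, also $aE_{ij}\in\Sigma$ for every $a\in R$ and $i\neq j$. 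This last fact disposes of the off-diagonal part: writing $M=D+F$ with $D$ the diagonal of $M$ and $F$ the off-diagonal part, $F\in\Sigma$, so it is enough to show $D\in\Sigma$.

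\emph{The crux.} The heart of the matter is the identity, valid over any ring and for any $a$,
\[
\begin{pmatrix}a&0\\0&-a\end{pmatrix}
=\begin{pmatrix}a&0\\0&a-1\end{pmatrix}^{2}
+\begin{pmatrix}1&0\\0&0\end{pmatrix}^{2}
+\begin{pmatrix}0&1\\-(a^{2}-a+1)&0\end{pmatrix}^{2},
\]
which places $\operatorname{diag}(a,-a)$ in $\Sigma$ (in $M_{2}$, hence in $M_{n}$ after padding with zeros). I expect finding this identity to be the main obstacle: the naive attempts stall because a square of nonzero trace contributes $p^{2}-s^{2}$ to the difference of its two diagonal entries, forcing an equation $p^{2}-s^{2}=2a$ that need not be solvable in general (over $\mathbb{Z}$ one can rescue it using signs and the fact that $2|a|$ is eventually a sum of squares, but this collapses over, say, $\mathbb{Z}[a]$); the identity above sidesteps this by parking the awkward factor in the $(2,1)$ entry. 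Granting it, a short induction on $n$ shows every diagonal matrix of trace $0$ lies in $\Sigma$: split $\operatorname{diag}(a_{1},\dots,a_{n})=(\operatorname{diag}(a_{1},-a_{1})\oplus 0)+(0\oplus\operatorname{diag}(a_{1}+a_{2},a_{3},\dots,a_{n}))$ and recurse. Finally, given $\operatorname{Tr}M=\sum_{\ell}b_{\ell}^{2}+2c$, write $D=\operatorname{diag}(\operatorname{Tr}M,0,\dots,0)+D_{0}$ where $D_{0}$ is diagonal of trace $0$; then $D_{0}\in\Sigma$ by the induction, while $\operatorname{diag}(\operatorname{Tr}M,0,\dots,0)=\sum_{\ell}(b_{\ell}E_{11})^{2}+2cE_{11}$ with $2cE_{11}=\operatorname{diag}(c,-c,0,\dots,0)+(E_{12}+cE_{21})^{2}\in\Sigma$. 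Hence $D\in\Sigma$, and therefore $M\in\Sigma$, completing the argument.
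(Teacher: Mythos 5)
Your proof is correct, but note that the paper does not prove this statement at all --- it is quoted verbatim from Vaserstein's paper \cite{lv}, so there is no in-paper argument to match. Your write-up is a complete, self-contained proof: the forward direction via $\operatorname{Tr}(A^2)=\sum_i a_{ii}^2+\sum_{i<j}(a_{ij}a_{ji}+a_{ji}a_{ij})$ and the identity $xy+yx\equiv(x+y)^2+x^2+y^2\pmod{2R}$ is fine even for noncommutative $R$; the matrix-unit identities putting $aE_{ij}$, $c(E_{ii}+E_{jj})$ and $-E_{ii}$ into $\Sigma$ all check out; and the crux identity does evaluate to $\operatorname{diag}(a,-a)$ (top-left $a^2+1-(a^2-a+1)=a$, bottom-right $a^2-2a+1-(a^2-a+1)=-a$), after which the trace-zero induction and the decomposition $2cE_{11}=\operatorname{diag}(c,-c)+(E_{12}+cE_{21})^2$ close the argument. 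The route the present paper would take (and does take for sixth and eighth powers) is different in organization: it runs everything through Theorem \ref{th_kg}, reducing ``$M$ is a sum of $k$-th powers'' to ``$\operatorname{Tr}M$ is a sum of traces of $k$-th powers,'' and then, for $k=2$, observes that traces of squares of the $2\times 2$ companion matrices $\left(\begin{smallmatrix}t&\delta\\-1&0\end{smallmatrix}\right)$ realize every $t^2-2\delta$, so the relevant subgroup is exactly $\{\text{sums of squares}\}+2R$. That machinery buys uniformity across all $k$ and all $n$ at the cost of invoking Theorem \ref{th_kg}; your argument is more elementary and explicit, handles the noncommutative case directly, and isolates the one genuinely nontrivial identity (for $\operatorname{diag}(a,-a)$), but it is specific to $k=2$.
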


It was D. R. Richman who first gave the trace criteria for a matrix of any size to be a sum of $k$-th powers of matrices,
provided $n\ge k\ge 2,$ in the form of the following theorem.
\begin{theorem}(\cite{dr}, Proposition 4.2) \label{th_dr}
	Let $R$ be a ring. If $n\ge k\ge 2$ are integers, then the following statements are equivalent:
	\begin{enumerate}
		\item $M$ is a sum of $k^{th}$ powers in $M_n(R).$
		\item $M$ is a sum seven $k^{th}$ powers in $M_n(R).$
		\item $M\in M_n(R)$ and for every prime power  $p^e$ dividing $k$, there exist elements
                  $x_0,x_1,\ldots, x_e$ (depending on $p$) in $R$ such that \[\mbox{Tr } M=x_0^{p^e}+px_1^{p^e-1}+\cdots + p^ex_e.\]
		  Moreover if $k=p$ is a prime, in statement (2) "seven" is replaced by "five" and statement (3) simplifies to
                  Tr $M=x_0^p+px_1,$ for some $x_0, x_1\in R.$
	\end{enumerate}
\end{theorem}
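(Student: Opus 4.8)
\medskip
\noindent\textbf{Proof proposal.} The implication $(2)\Rightarrow(1)$ is immediate. For $(1)\Rightarrow(3)$ the clause $M\in M_n(R)$ is part of the hypothesis, so all that is needed is the trace formula: if $M=\sum_{j=1}^{r}M_j^{k}$, I fix a prime power $p^{e}\mid k$, write $k=p^{e}m$, and note $\mathrm{Tr}\,M=\sum_{j}\mathrm{Tr}\big((M_j^{m})^{p^{e}}\big)$. For any $B\in M_n(R)$ the quantity $\mathrm{Tr}(B^{p^{e}})$ is the $p^{e}$-th power sum of the roots of the characteristic polynomial of $B$, hence, by Newton's identities (which have integer coefficients), a polynomial with integer coefficients in the coefficients of that characteristic polynomial, and so an element of $R$. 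I would then invoke the ring-theoretic fact --- cleanest via Witt vectors, since $\sum_i\mu_i^{p^{e}}$ is the $e$-th ghost component of the Witt vector $\sum_i[\mu_i]$, or else by a direct multinomial/Newton-identity computation --- that \emph{any} finite sum of $p^{e}$-th powers in a commutative ring is again of the form $x_0^{p^{e}}+px_1^{p^{e-1}}+\cdots+p^{e}x_{e}$, and that this class of elements is closed under addition. Applying this to each $\mathrm{Tr}(B^{p^{e}})$ and then to their sum gives the required expression for $\mathrm{Tr}\,M$; for $k=p$ prime the fact reduces to $a^{p}+b^{p}\equiv (a+b)^{p}\pmod{pR}$ and yields $\mathrm{Tr}\,M=x_0^{p}+px_1$.

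The substance is $(3)\Rightarrow(2)$, and the plan has two halves. \emph{Reducing to the trace.} The matrix $M-\mathrm{diag}(\mathrm{Tr}\,M,0,\dots,0)$ has trace $0$, so I would first prove a lemma that, for $n\ge k$, every trace-zero matrix over $R$ is a sum of a bounded absolute number of $k$-th powers; this is done by decomposing a trace-zero matrix into elementary pieces $rE_{ij}$ $(i\ne j)$ and $r(E_{ii}-E_{jj})$ and writing each as a sum of conjugates of suitable nilpotent/companion matrices whose $k$-th powers are explicit, the hypothesis $n\ge k$ supplying the needed room. Since trace-zero matrices are then absorbed for free, it suffices to produce \emph{some} short sum $\sum_{j}A_j^{k}$ of $k$-th powers with $\mathrm{Tr}\big(\sum_{j}A_j^{k}\big)=\mathrm{Tr}\,M$. \emph{Realizing the trace.} For this I would use block companion matrices: the companion matrix of $z^{k/d}-c$ has size $k/d\le n$ and $k$-th power $c^{d}I$, and block-cyclic-shift variants produce $k$-th powers whose trace realizes the individual Witt-form terms $p^{i}x_i^{p^{e-i}}$, carried on a $p^{i}\times p^{i}$ block. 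Using the hypothesis that $\mathrm{Tr}\,M$ equals the Witt form $x_0^{p^{e}}+px_1^{p^{e-1}}+\cdots+p^{e}x_{e}$ for every prime power $p^{e}\mid k$, together with a Chinese-remainder bookkeeping across the distinct primes dividing $k$, one assembles $\mathrm{Tr}\,M$ as the trace of boundedly many such $k$-th powers; tallying the summands contributed by each step yields the bound seven in general, and five when $k=p$ (where the trace needs only $\mathrm{diag}(x_0,0,\dots)^{p}$ and the companion matrix of $z^{p}-x_1$).

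The hard part will be this trace-realization step when $k$ is composite. In the prime case $t=x_0^{p}+px_1$ splits transparently into a ``$p$-th power'' summand and a ``$p\cdot(\text{anything})$'' summand, but for composite $k$ one must realize a single element $t$ that is simultaneously of Witt type at several primes, while keeping the total count at seven and respecting the constraint $n\ge k$. The delicate points are choosing the sizes of the block gadgets so that the top term occupies a $p^{e}\times p^{e}$ block and all lower terms fit into one further block of size $\le k$, and carrying out the cross-prime gluing; the trace-zero lemma, by contrast, is routine, though it too uses $n\ge k$ essentially.
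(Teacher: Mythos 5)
A preliminary remark: the paper does not prove this statement at all --- it is Richman's Proposition 4.2, quoted verbatim from \cite{dr} as background, and the paper only reuses its ingredients (Propositions \ref{prop_powerps} and \ref{prop_traceps}). So your attempt can only be measured against Richman's published argument. Your $(2)\Rightarrow(1)$ is trivially fine, and your $(1)\Rightarrow(3)$ is essentially Richman's own route: $\mathrm{Tr}(B^{p^e})$ lies in $W(p,e,R)=\{x_0^{p^e}+px_1^{p^{e-1}}+\cdots+p^ex_e\}$ because the Witt components of $\sum_i[\mu_i]$ are symmetric integer polynomials in the eigenvalues (hence integer polynomials in the coefficients of the characteristic polynomial, hence in $R$), and $W(p,e,R)$ is closed under addition. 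That direction is correct in outline, though compressed.

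The genuine gap is in $(3)\Rightarrow(2)$, which is the substance of the theorem and which you leave as a plan with both key lemmas unproved. (i) The trace-zero absorption lemma is not ``routine'' in the form you sketch: an elementary piece $rE_{ij}$ satisfies $(rE_{ij})^2=0$, so any $A$ with $A^k=rE_{ij}\neq 0$ would be nilpotent of index greater than $k$, which is impossible in $M_k(R)$ for $R$ a field; hence in the boundary case $n=k$ these pieces are not $k$-th powers and must be built as genuine sums, and it is precisely the bookkeeping of how many summands this costs that produces the constants seven and five, which you never actually tally. (ii) The trace-realization gadget does not compute what you need: the companion matrix of $z^{k/d}-c$ has $k$-th power $c^{d}I_{k/d}$ with trace $(k/d)c^{d}$, so taking $d=p^{e-i}$ with $k=p^{e}m$ yields $p^{i}m\,c^{p^{e-i}}$, carrying an unwanted factor $m$; the individual Witt terms $p^{i}x_i^{p^{e-i}}$ are therefore not realized as written, and reconciling the Witt conditions at the several primes dividing $k$ (your ``Chinese-remainder bookkeeping'') is exactly where Richman's proof does its real work, via closure of $W(p,s,R)$ under subtraction and B\'ezout relations among the cofactors $k/p^{e}$. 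You correctly identify this as the hard part, but identifying it is not the same as carrying it out: as it stands the proposal is a sound roadmap consistent with the known proof, not a proof.
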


Let $K$ be an algebraic number field of degree $n.$ A subring of $K$ with unity which is also a $\Z$-module of $K$ of rank $n$ is called an order of $K.$ Let $\mathcal{O}$ be an order of $K.$ The  following theorem due to S. A. Katre and S. A. Khule gives
the discriminant criterion for a matrix to be a sum of $k^{th}$ powers of matrices over $\mathcal{O}.$ 
\begin{theorem}[\cite{kk}, Theorem 1]\label{th_kk}
	Let $\mathcal{O}$ be an order of a number field $K$ and $n\ge k\ge 2$. Then every matrix $M\in M_n(\mathcal{O})$ is  a sum of $k^{th}$ powers of matrices if and only if $(k,\mbox{ disc }\mathcal{O})=1.$	
\end{theorem}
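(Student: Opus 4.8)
The plan is to combine Richman's Theorem~\ref{th_dr} with the elementary structure of the finite ring $\mathcal{O}/p\mathcal{O}$. Since $n\ge 2$, the trace map $M_n(\mathcal{O})\to\mathcal{O}$ is surjective (for instance $\operatorname{Tr}\operatorname{diag}(a,0,\dots,0)=a$), so the equivalence $(1)\Leftrightarrow(3)$ of Theorem~\ref{th_dr} shows that every $M\in M_n(\mathcal{O})$ is a sum of $k$-th powers of matrices if and only if, for every prime power $p^{e}$ dividing $k$ and every $a\in\mathcal{O}$, the element $a$ admits an expression of the form in statement~(3); call this property $(\star_{p,e})$. The structural feature of that expression we shall use is that its $i$-th summand, for $0\le i<e$, is $p^{i}$ times a $p^{e-i}$-th power of an element of $\mathcal{O}$, while its last summand is the linear term $p^{e}x_{e}$. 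Since $\gcd(k,\operatorname{disc}\mathcal{O})=1$ exactly when no prime divides both, it suffices to prove: for a prime $p\mid k$, if $p\nmid\operatorname{disc}\mathcal{O}$ then $(\star_{p,e})$ holds for every $e\ge 1$, and if $p\mid\operatorname{disc}\mathcal{O}$ then $(\star_{p,e})$ fails for $e=v_{p}(k)$ (which is a prime power dividing $k$).

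The bridge to the discriminant is the standard fact that $A:=\mathcal{O}/p\mathcal{O}$, a commutative $\mathbb{F}_{p}$-algebra of dimension $n$, has nondegenerate trace form precisely when $p\nmid\operatorname{disc}\mathcal{O}$ (reduce a Gram matrix of the trace form of $\mathcal{O}$ modulo $p$), and that over the perfect field $\mathbb{F}_{p}$ this is equivalent to $A$ being reduced, in which case $A\cong\prod_{j}\mathbb{F}_{q_{j}}$ is a product of finite fields. On any $\mathbb{F}_{p}$-algebra the Frobenius map $\varphi\colon x\mapsto x^{p}$ is a ring endomorphism; on a finite reduced one it is injective, hence bijective. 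Therefore $p\nmid\operatorname{disc}\mathcal{O}$ forces $x\mapsto x^{p^{j}}=\varphi^{j}(x)$ to be a bijection of $A$ for every $j\ge 0$, whereas $p\mid\operatorname{disc}\mathcal{O}$ makes $A$ non-reduced, so $\varphi$ is not injective, hence not surjective, and some $\bar a\in A$ is not a $p$-th power --- a fortiori not a $p^{j}$-th power for any $j\ge 1$.

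For the ``if'' direction, assume $\gcd(k,\operatorname{disc}\mathcal{O})=1$, so $p\nmid\operatorname{disc}\mathcal{O}$ for every prime $p\mid k$; fix such a $p$, an exponent $e\ge 1$, and $a\in\mathcal{O}$, and verify $(\star_{p,e})$ by successive approximation. Set $r_{0}=a$. Given $r_{i}\in\mathcal{O}$ with $i<e$, surjectivity of $x\mapsto x^{p^{e-i}}$ on $A$ yields $x_{i}\in\mathcal{O}$ with $r_{i}\equiv x_{i}^{\,p^{e-i}}\pmod{p\mathcal{O}}$, and we write $r_{i}-x_{i}^{\,p^{e-i}}=p\,r_{i+1}$. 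After $e$ steps this gives $a=\sum_{i=0}^{e-1}p^{i}x_{i}^{\,p^{e-i}}+p^{e}r_{e}$, so $x_{e}:=r_{e}$ completes the expression required by statement~(3); hence by Theorem~\ref{th_dr} every matrix in $M_{n}(\mathcal{O})$ is a sum of $k$-th powers (indeed of seven of them). For the ``only if'' direction, suppose a prime $p$ divides $(k,\operatorname{disc}\mathcal{O})$ and put $e=v_{p}(k)\ge 1$. Using the second paragraph, choose $a\in\mathcal{O}$ whose class in $A$ is not a $p$-th power. If $\operatorname{diag}(a,0,\dots,0)$ were a sum of $k$-th powers, statement~(3) of Theorem~\ref{th_dr} applied to the prime power $p^{e}$ would give $a=x_{0}^{\,p^{e}}+p(\,\cdots)$; reducing modulo $p\mathcal{O}$ then forces $\bar a=\bigl(\overline{x_{0}}^{\,p^{e-1}}\bigr)^{p}$, a $p$-th power in $A$, a contradiction. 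Thus not every matrix over $\mathcal{O}$ is a sum of $k$-th powers.

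The step I expect to demand the most care is the successive-approximation argument in the ``if'' direction: matching at each stage the $p^{e-i}$-th-power shape of the leftover term against the available Frobenius surjectivity on $A$, and checking that after $e$ steps the residue really lies in $p^{e}\mathcal{O}$ so as to be absorbed by the linear term $p^{e}x_{e}$. The discriminant/reducedness dictionary of the second paragraph and the ``only if'' direction are then routine, modulo the standard input that a finite reduced algebra over the perfect field $\mathbb{F}_{p}$ splits as a product of finite fields.
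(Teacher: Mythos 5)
This theorem is quoted from Katre--Khule \cite{kk} and the paper supplies no proof of it, so there is no internal argument to compare against; judged on its own, your proof is correct and follows what is essentially the original route. Richman's criterion plus surjectivity of the trace reduces everything to representing elements of $\mathcal{O}$ in the Witt-type form $x_0^{p^e}+px_1^{p^{e-1}}+\cdots+p^ex_e$, and your discriminant/Frobenius dictionary for $\mathcal{O}/p\mathcal{O}$ is exactly the equivalence the paper records as Lemma~\ref{le_power}; the successive-approximation lift in the ``if'' direction and the reduction-mod-$p$ obstruction in the ``only if'' direction are both sound.
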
 
The restriction $n \ge k$ in Richman's theorem was removed by generalized trace criteria given by S. A. Katre and the second author 
via the following theorem: 
\begin{theorem}[\cite{kg}, Theorem 3.1] \label{th_kg}
	Let $R$ be a ring and $n,k\ge 2$ be integers. Let $M\in M_n(R)$. Then the following are equivalent:
	\begin{enumerate}
		\item $M$ is  a sum of $k^{th}$ powers of matrices in $M_n(R).$
		\item Tr $M$ is a sum of traces of $k^{th}$ powers of matrices in $M_n(R).$
		\item Tr $M$ is in the subgroup of $R$ generated by the traces of $k^{th}$ powers of matrices in $M_n(R).$
		\item Tr $M$ is in the subgroup of $R$ generated by the traces of $k^{th}$ powers of matrices in $M_n(R)$ (mod $k!R$).
		\item Tr $M$ is a sum of traces of $k^{th}$ powers of matrices in $M_n(R)\pmod{k!R}.$
	\end{enumerate}
\end{theorem}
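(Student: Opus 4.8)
The plan is to run the cyclic chain $(1)\Rightarrow(2)\Rightarrow(3)\Rightarrow(4)\Rightarrow(5)\Rightarrow(1)$, in which all but two arrows are formal. Indeed $(1)\Rightarrow(2)$ holds because the trace is $\Z$-linear, $(2)\Rightarrow(3)$ because a sum of elements lies in the subgroup they generate, and $(3)\Rightarrow(4)$ because passing to $R/k!R$ only weakens the condition; likewise $(2)\Rightarrow(5)$ is immediate. So the whole theorem comes down to $(5)\Rightarrow(1)$ together with $(4)\Rightarrow(5)$, and both of these will follow at once from two structural lemmas, established below for arbitrary commutative $R$ and $n\ge 2$.

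The first is a \emph{trace-zero lemma}: every matrix $N\in M_n(R)$ with $\mathrm{Tr}\,N=0$ is a sum of $k$-th powers of matrices in $M_n(R)$. (For $n\ge k$ this is subsumed by Richman's Theorem~\ref{th_dr}, applied to the trace $0$; the new content lies in the range $n<k$.) The second is a \emph{$k!$-lemma}: the additive subgroup of $R$ generated by $\{a^{k}:a\in R\}$ contains $k!R$, and moreover every matrix in $k!\,M_n(R)$ is a sum of $k$-th powers in $M_n(R)$. The first half is the finite-difference identity: the $(k-1)$-st finite difference of $t^{k}$ equals $k!\,t$ plus a constant, and both sides are written out as explicit $\Z$-linear combinations of $k$-th powers of elements of $R$. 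For the matrix half one must realize the off-diagonal blocks $k!\,\lambda E_{ij}$ ($i\ne j$) and the diagonal ones $k!\,\lambda(E_{ii}-E_{jj})$ as sums of $k$-th powers; the off-diagonal blocks come from the explicit formula $\bigl(\begin{smallmatrix}1&\lambda\\0&1\end{smallmatrix}\bigr)^{k}=\bigl(\begin{smallmatrix}1&k\lambda\\0&1\end{smallmatrix}\bigr)$ placed in a $2\times2$ corner, after which one must subtract off the surviving identity summand — which in turn forces one to show that $-cI$ is a sum of $k$-th powers in $M_n(\Z)$ for some integer $c\ge 1$ (and hence so is $-I$). That last point is trivial for odd $k$, while for even $k$ it is settled by small explicit computations, for example $\bigl(\begin{smallmatrix}2&-2\\1&0\end{smallmatrix}\bigr)^{4}=-4I_2$, giving $-I_2=\bigl(\begin{smallmatrix}2&-2\\1&0\end{smallmatrix}\bigr)^{4}+3\,I_2^{\,4}$.

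Granting the two lemmas, the rest is bookkeeping. For $(5)\Rightarrow(1)$: write $\mathrm{Tr}\,M=\sum_i\mathrm{Tr}(A_i^{k})+k!\,r$ with $A_i\in M_n(R)$ and $r\in R$. Using the $k!$-lemma together with the fact that $-I_n$ is a sum of $k$-th powers (to absorb the negative coefficients that occur when $k!\,r$ is expanded by the finite-difference identity), one builds a matrix $D\in M_n(R)$ that is a genuine sum of $k$-th powers with $\mathrm{Tr}\,D=k!\,r$. Then $P:=\sum_i A_i^{k}+D$ is a sum of $k$-th powers and $\mathrm{Tr}(M-P)=0$, so the trace-zero lemma makes $M-P$ — hence $M=P+(M-P)$ — a sum of $k$-th powers. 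For $(4)\Rightarrow(5)$: by the $k!$-lemma, $k!R$ already lies in the subgroup generated by the traces of $k$-th powers; and the set of sums of traces of $k$-th powers, once enlarged by $k!R$, is in fact a subgroup of $R$ — this uses only that $-\mathrm{Tr}(A^{k})$ is again a sum of traces of $k$-th powers (trivial for odd $k$, and for even $k$ produced by an explicit choice of matrices). Hence membership in that subgroup modulo $k!R$ is the same as being a sum of traces of $k$-th powers modulo $k!R$, which is $(5)$. This closes the cycle.

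The main obstacle is the trace-zero lemma in the range $n<k$, and especially its even-$k$ instances: by Cayley--Hamilton a trace-zero $2\times2$ matrix has scalar square, so no non-scalar trace-zero matrix over $\Z$ is a single $k$-th power, and it must be assembled from several. The natural generators are the unipotents $\bigl(\begin{smallmatrix}1&\lambda\\0&1\end{smallmatrix}\bigr)$ together with their transposes and $GL_2$-conjugates, along with the companion-matrix $k$-th powers $\bigl(\begin{smallmatrix}t&-d\\1&0\end{smallmatrix}\bigr)^{k}$ (whose entries are explicit polynomials in $t,d$); the difficulty is to combine these so that the resulting identities already hold over the prime subring of $R$ and therefore specialize to an arbitrary commutative $R$. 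A convenient device for manufacturing the scalars one needs is to start from a matrix whose characteristic polynomial is irreducible over a small imaginary quadratic field of class number one, arranged so that a chosen power has a prescribed (small, possibly negative) trace, and then to complete it to a scalar matrix by a $GL_2(\Z)$-conjugate; organizing these constructions according to the prime-power factorization of $k$ finishes the proof.
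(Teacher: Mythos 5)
First, a remark on scope: the present paper does not prove this theorem at all --- it is imported verbatim from \cite{kg} (Theorem 3.1 there) and used as a black box. So your proposal can only be measured against the proof in that reference. Measured that way, your architecture is the right one and essentially coincides with it: the implications $(1)\Rightarrow(2)\Rightarrow(3)\Rightarrow(4)$ are formal; $(4)\Rightarrow(5)$ follows because $-\mathrm{Tr}(A^k)\equiv(k!-1)\,\mathrm{Tr}(A^k)\pmod{k!R}$ is itself a sum of traces of $k$-th powers (you make this harder than it is --- no ``explicit choice of matrices for even $k$'' is needed); the finite-difference identity puts $k!R$ inside the additive span of $k$-th powers; and $(5)\Rightarrow(1)$ reduces to the statement that every trace-zero matrix in $M_n(R)$ is a sum of $k$-th powers.

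The genuine gap is that this trace-zero lemma is never proved, and for $2\le n<k$ it \emph{is} the theorem: everything else in your outline is bookkeeping around it, and you yourself call it ``the main obstacle'' before replacing its proof with a description of ingredients. Two concrete reasons the sketch does not close. (i) What is needed is a family of polynomial identities over $\Z$ in the matrix entries, valid uniformly for every $k\ge2$ and hence specializing to every commutative ring; your device of choosing an integer matrix whose eigenvalues lie in an imaginary quadratic field so that a chosen power is a prescribed negative scalar produces only isolated numerical identities for particular $k$. Already for $-I_2$ it is not uniform: your example $\bigl(\begin{smallmatrix}2&-2\\1&0\end{smallmatrix}\bigr)^{4}=-4I_2$ rests on $(1+i)^4=-4$, but $(1+i)^{8}=16$ is positive, so for $k\equiv 0\pmod 8$ that construction gives nothing (one must switch to, say, order-$6$ matrices with $X^{8}+Y^{8}=-I_2$), and no recipe is given that works for all even $k$ at once. (ii) Even granting $-I_n$, the lemma requires realizing the generators $\lambda E_{ij}$ ($i\ne j$) and $\lambda(E_{ii}-E_{jj})$ for an \emph{arbitrary ring element} $\lambda$ as sums of $k$-th powers; companion-matrix powers and unipotents alone do not obviously produce $\lambda(E_{ii}-E_{jj})$, and ``organizing these constructions according to the prime-power factorization of $k$'' is precisely the inductive argument with explicit universal identities that occupies the core of \cite{kg}. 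As written, your proof assumes the conclusion of that induction, so the cycle $(5)\Rightarrow(1)$ is not established.
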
  

In the same paper, the proof of the following theorem is given:
\begin{theorem}[\cite{kg}, Theorem 3.5] \label{th_mn} Let $R$ be a ring and $n\ge m \ge 1$ and $k\ge 2$ be integers.
  If every matrix in $M_m(R)$ is a sum of $k^{th}$ powers of matrices in $M_m(R)$, then every matrix in $M_n(R)$ is
  also a sum of $k^{th}$ powers of matrices in $M_n(R).$
\end{theorem}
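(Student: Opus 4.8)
The plan is to route the entire argument through the trace characterisation of Theorem~\ref{th_kg}, so that one never has to manipulate an arbitrary, non-block-diagonal matrix of size $n$ directly. The statement is trivial when $m=n$, so assume $n>m$; then $n\ge 2$, and since we are given $k\ge 2$, Theorem~\ref{th_kg} applies to $M_n(R)$. For an integer $N\ge 1$ write $G_N$ for the subgroup of $R$ generated by the traces of $k$-th powers of matrices in $M_N(R)$. The first ingredient is an elementary embedding observation: for $A\in M_m(R)$ set $\iota(A)=\begin{pmatrix} A & 0 \\ 0 & 0 \end{pmatrix}\in M_n(R)$, the block-diagonal matrix whose lower right $(n-m)\times(n-m)$ block is zero. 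Block-diagonal matrices raise to powers block by block, so $\iota(A)^k=\iota(A^k)$ and hence $\mathrm{Tr}\,\iota(A)^k=\mathrm{Tr}\,A^k$. Consequently every generator of $G_m$ is also a generator of $G_n$, i.e.\ $G_m\subseteq G_n$.

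Next I would bring in the hypothesis. The trace map $M_m(R)\to R$ is onto (the trace of $\mathrm{diag}(r,0,\dots,0)$ is $r$), so any $r\in R$ equals $\mathrm{Tr}\,M$ for some $M\in M_m(R)$; by hypothesis $M=\sum_i M_i^k$ with $M_i\in M_m(R)$, whence $r=\sum_i \mathrm{Tr}\,M_i^k\in G_m$. Therefore $G_m=R$, and combining with the embedding observation gives $G_n=R$ as well. Now let $M\in M_n(R)$ be arbitrary: then $\mathrm{Tr}\,M\in R=G_n$, which is exactly statement~(3) of Theorem~\ref{th_kg} for $M$, so by that theorem statement~(1) holds, i.e.\ $M$ is a sum of $k$-th powers of matrices in $M_n(R)$. (One could equivalently verify statement~(2): $\mathrm{Tr}\,M$ is a sum of traces of $k$-th powers in $M_n(R)$, via the same computation with $\iota$.)

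I do not expect a genuine obstacle in this line of attack; the only points requiring care are the harmless reductions ($m=n$ trivial, $n>m\Rightarrow n\ge 2$ so that Theorem~\ref{th_kg} is applicable) and the verification that $\iota$ carries $k$-th powers to $k$-th powers block-wise. The reason the trace criterion is doing the real work is that a hands-on proof — decomposing a general $M\in M_n(R)$ into pieces that "live in" $M_m(R)$ — would be considerably more delicate, and Theorem~\ref{th_kg} is precisely the tool that lets us sidestep it.
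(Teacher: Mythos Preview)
Your argument is correct. The paper does not actually supply a proof of Theorem~\ref{th_mn}; it merely cites \cite{kg} for it. However, the mechanism you use --- embed $A\in M_m(R)$ as a block-diagonal matrix $\begin{pmatrix} A & 0 \\ 0 & 0\end{pmatrix}\in M_n(R)$, observe that traces of $k$-th powers are preserved, and then invoke the trace criterion of Theorem~\ref{th_kg} --- is precisely the technique the paper employs in the closely related Corollary~\ref{cor_equaltwoenough}, so your approach is entirely in line with the paper's methods.
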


\begin{remark}\label{rem_thm3.5}
	We will use this theorem in the next section in the following way: 
        if we want to show every matrix in $M_n(R)$, $n>2$ satisfying a certain property is a sum of $k^{th}$ powers of matrices,
        then it is sufficient to prove it for every matrix in $M_2(R)$ satisfying the same property to be a sum of $k^{th}$
        powers of matrices in $M_2(R)$.
\end{remark}

The motivation for this paper is the following two theorems due to S. A. Katre and the second author.
\begin{theorem}[Trace criteria]\label{th_tr_23457}
  Let $R$ be a commutative ring with unity. Let $k=2,3,5,7.$ Then a matrix $M\in M_n(R)$ is a sum of
  $k^{th}$ powers of matrices over $R$ if the Tr $M$ is a $k^{th}$ power of an element in $R$ modulo $kR.$
	
	For $k=4,$ the matrix $M$ is a sum of fourth powers of matrices if the Tr $M=x_0^4+2x_1^2+4x_2,$ for some $x_0,x_1,x_2\in R.$
\end{theorem}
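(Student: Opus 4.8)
The plan is to reduce the statement, for each value of $k$, to a question about a subgroup of $R$. First I would use Remark~\ref{rem_thm3.5} (a consequence of Theorem~\ref{th_mn}): since the hypothesis on $M$ is a condition on $\mathrm{Tr}\,M$ alone, it suffices to prove that every $M\in M_2(R)$ whose trace satisfies the stated congruence (resp.\ the stated form, when $k=4$) is a sum of $k^{th}$ powers in $M_2(R)$ --- alternatively, for $n\ge k$ the conclusion is immediate from Theorem~\ref{th_dr}(3), whose conditions are exactly the ones stated here, and for $k=2$ one may instead quote Vaserstein's theorem, as $\mathrm{Tr}\,M\equiv a^2\pmod{2R}$ makes $\mathrm{Tr}\,M$ a square, hence a sum of squares, in $R/2R$. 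I would then apply Theorem~\ref{th_kg}(3): $M\in M_2(R)$ is a sum of $k^{th}$ powers exactly when $\mathrm{Tr}\,M$ lies in the subgroup $G\subseteq R$ generated by the traces of $k^{th}$ powers of $2\times2$ matrices. By Cayley--Hamilton $\mathrm{Tr}(A^k)=p_k(\mathrm{Tr}\,A,\det A)$ for $A\in M_2(R)$, where $p_k$ is the $k^{th}$ Newton power-sum polynomial (so $p_k(t,s)=\mu_1^k+\mu_2^k$ whenever $\mu_1+\mu_2=t$ and $\mu_1\mu_2=s$, and $p_k(t,0)=t^k$); since a companion matrix realizes every pair $(t,s)\in R^2$ as $(\mathrm{Tr}\,A,\det A)$, one has $G=\langle\,p_k(t,s):t,s\in R\,\rangle$.

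It remains to show that the element on the right-hand side of the trace condition lies in $G$. Two contributions come for free: $t^k=p_k(t,0)\in G$ for every $t\in R$, and, only when $k=4$, $2s^2=p_4(0,s)\in G$ for every $s\in R$ (here $p_4(t,s)=t^4-4t^2s+2s^2$). So the whole problem reduces to the inclusion $kR\subseteq G$ for $k\in\{2,3,5,7\}$ and $4R\subseteq G$ for $k=4$: granting these, $a^k+kb$ (resp.\ $x_0^4+2x_1^2+4x_2$) visibly lies in $G$, which is what is required. For $k=2,3,4$ this is a one-line computation with a single Newton difference, namely $p_2(1,s)-p_2(1,0)=-2s$, $p_3(1,s)-p_3(1,0)=-3s$, and $p_4(1,s)-p_4(1,0)-p_4(0,s)=-4s$.

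The main obstacle is the case $k=5$ and $k=7$. Here the naive difference $p_k(1,s)-p_k(1,0)$ is a polynomial in $s$ of degree $k-2$ (for instance $5s^2-5s$ when $k=5$, and $-7s^3+14s^2-7s$ when $k=7$), so a single difference yields only a proper-looking subgroup, and one must combine several of the values $p_k(t_i,s_i)$ with the $k^{th}$ powers already available. The tools I would marshal are: the identity $p_k(t+1,s)-p_k(t,s)$, which at $s=0$ is $(t+1)^k-t^k$, hence has constant term $1$ and (as $k$ is prime) all other coefficients divisible by $k$, so that subtracting two such differences cancels the constant and leaves $k$ times an explicit polynomial; iterated finite differences in the remaining variable; and the polarization identity $k!\,t_1\cdots t_k=\sum_{\varnothing\ne S\subseteq\{1,\dots,k\}}(-1)^{k-|S|}\bigl(\sum_{i\in S}t_i\bigr)^k$, which forces a multiple of $R$ into $\langle t^k\rangle$. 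The point I expect to be delicate is verifying that these two-variable combinations exhaust \emph{all} of $kR$ and not merely some proper multiple; a clean way to organise this is to pass to the universal ring $\mathbb Z[x_1,x_2,\dots]$ (every $r\in R$ is the image of a variable) and check $k x_1\in G$ there by a finite linear computation modulo $k$ --- which is, in effect, what the earlier papers cited for $3\le k\le5$ and $k=7$ carry out. (For $2<n<k$ there is also a shortcut: in $M_n(R)$ Newton's identity writes $p_k$ with a single term $\pm k\,e_n$, so a block-diagonal matrix built from a companion matrix and a scalar block already exhibits $kR\subseteq G$, leaving only $n=2$ to the argument above.) Once $kR\subseteq G$ holds, $\mathrm{Tr}\,M\in G$, and Theorem~\ref{th_kg} yields that $M$ is a sum of $k^{th}$ powers.
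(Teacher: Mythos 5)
First, a point of reference: the paper does not actually prove Theorem~\ref{th_tr_23457} --- it is quoted as input from \cite{kg} and \cite{ag} --- so the comparison has to be with the analogous proofs the paper \emph{does} give for $k=6,8$ (Theorems \ref{thm_tr6} and \ref{thm_tr8}). Your architecture matches those exactly: reduce to $n=2$ via Theorem~\ref{th_mn}, invoke Theorem~\ref{th_kg} to replace ``sum of $k$-th powers'' by ``$\mathrm{Tr}\,M$ lies in the subgroup $G$ generated by traces of $k$-th powers,'' parametrize those traces as $p_k(t,\delta)$ via companion matrices, and reduce everything to the inclusion $kR\subseteq G$. Your treatment of $k=2,3,4$ is complete and correct, and the observation that for $n\ge k$ one can simply quote Richman (Theorem~\ref{th_dr}(3)) is fine.

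The genuine gap is $k=5$ and $k=7$ at $n=2$, which is the entire content of the theorem beyond Richman, and which you explicitly leave to ``the earlier papers cited.'' Worse, the tools you list do not obviously suffice, and some demonstrably fall short. For $k=5$ one has $p_5(t,\delta)-t^5=5t\delta(\delta-t^2)$, and all the ``one-difference'' moves you describe land in $10R$ rather than $5R$: e.g.\ $p_5(1,\delta+1)-p_5(1,\delta)=10\delta$, $p_5(t,t^2+1)+p_5(t,t^2-1)-2t^5=10t$, and polarization gives only $120R$. Indeed, working modulo $10R$ in the universal ring $\mathbb{Z}[x]$, every element $5t\delta(\delta-t^2)$ reduces to $5$ times a polynomial vanishing at $x=1$ over $\mathbb{F}_2$, so the subgroup generated by these differences alone provably misses $5x$. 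Reaching odd multiples of $5$ forces genuine use of quintic-power identities such as $(t+5u)^5-t^5\equiv 5(t^4u+tu^4+u^5)\pmod{10R}$, and one must then verify that the resulting elements span all of $5R$ --- exactly the ``delicate point'' you flag but do not resolve. The paper's own proofs for $k=6,8$ show what is required here: an explicit, finite list of matrices together with a verified chain of trace identities exhibiting $6b$ (resp.\ $8d$) as a sum of traces of powers. Your proposal stops precisely where that computation would have to begin, so as written it is a correct reduction plus a citation of \cite{ag}, not a proof.
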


\begin{theorem}[Discriminant criteria] \label{th_dis_23457}
	Let $\mathcal{O}$ be an order of a number field and $k=2,3,5,7.$ Then every matrix over $\mathcal{O}$ is a sum of $k^{th}$ powers of matrices over $\mathcal{O}$  if and only if $(k,\mbox{disc }\mathcal{O})=1.$
	
	For $k=4$, the discriminant criterion  becomes $(2,\mbox{disc }\mathcal{O})=1.$
\end{theorem}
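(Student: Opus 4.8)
The plan is to deduce both implications from the trace criterion of Theorem~\ref{th_tr_23457}, together with one piece of commutative algebra about the ring $A_p := \mathcal{O}/p\mathcal{O}$ for a rational prime $p$. First I would record the dictionary: in a $\mathbb{Z}$-basis of $\mathcal{O}$ the Gram matrix of the trace form of $A_p$ is the mod-$p$ reduction of the matrix whose determinant is $\operatorname{disc}\mathcal{O}$, so $p\nmid\operatorname{disc}\mathcal{O}$ holds exactly when that trace form is nondegenerate, i.e.\ when $A_p$ is \'etale over $\mathbb{F}_p$; since $\mathbb{F}_p$ is perfect this is the same as $A_p$ being reduced, in which case $A_p\cong\prod_i\mathbb{F}_{q_i}$, a finite product of finite fields of characteristic $p$, on which the Frobenius $x\mapsto x^p$ and all of its iterates are bijective. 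Conversely, when $p\mid\operatorname{disc}\mathcal{O}$, $A_p$ is not reduced, and by taking a suitable power of a nonzero nilpotent one produces $0\ne u\in A_p$ with $u^2=0$, hence $u^p=0$; since $A_p$ is finite this makes the Frobenius on $A_p$ non-surjective.

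For the ``if'' direction, assuming $(k,\operatorname{disc}\mathcal{O})=1$: for $k\in\{2,3,5,7\}$ the prime $k$ does not divide $\operatorname{disc}\mathcal{O}$, so by the dictionary every element of $A_k$ — in particular $\operatorname{Tr} M$ mod $k\mathcal{O}$ for any $M$ — is a $k$-th power, and Theorem~\ref{th_tr_23457} immediately gives that every matrix over $\mathcal{O}$ is a sum of $k$-th powers. For $k=4$, from $2\nmid\operatorname{disc}\mathcal{O}$ I would use bijectivity of $x\mapsto x^4$ and of $x\mapsto x^2$ on $A_2$ to solve $\operatorname{Tr} M\equiv x_0^4\pmod{2\mathcal{O}}$ and then $\tfrac12(\operatorname{Tr} M-x_0^4)\equiv x_1^2\pmod{2\mathcal{O}}$, producing $\operatorname{Tr} M=x_0^4+2x_1^2+4x_2$ and invoking the $k=4$ case of Theorem~\ref{th_tr_23457}.

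For the ``only if'' direction, set $p=k$ if $k$ is prime and $p=2$ if $k=4$, so the failure of the discriminant condition means $p\mid\operatorname{disc}\mathcal{O}$; by the dictionary I may fix $t\in\mathcal{O}$ whose class in $A_p$ is not a $p$-th power, and I would test $M=\operatorname{diag}(t,0,\dots,0)\in M_2(\mathcal{O})$. The key lemma to prove is the congruence $\operatorname{Tr}(N^p)\equiv(\operatorname{Tr} N)^p\pmod{p\mathcal{O}}$ for all $N\in M_n(\mathcal{O})$ — it follows because, expressing the power sum $p_p$ and $e_1^p$ through the coefficients of the characteristic polynomial by Newton's identities, the difference $p_p-e_1^p$ lies in $p\,\mathbb{Z}[e_1,\dots,e_n]$ (the offending multinomial coefficients being divisible by $p$), hence reduces to a multiple of $p$ in $\mathcal{O}$. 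Granting this, if $M=\sum_i M_i^k$ then, since $k/p\in\{1,2\}$, $M$ is a sum of $p$-th powers $\sum_i N_i^{\,p}$ with $N_i=M_i^{k/p}$, and applying the congruence term by term together with additivity of Frobenius on $A_p$ one gets $\operatorname{Tr} M\equiv(\sum_i\operatorname{Tr} N_i)^p\pmod{p\mathcal{O}}$, forcing $\bar t$ to be a $p$-th power in $A_p$ — a contradiction. (For $k=4$ there is an alternative: a sum of fourth powers is a sum of squares, so Vaserstein's theorem applies, the sums of squares in the characteristic-$2$ ring $A_2$ being exactly the image of Frobenius.)

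I expect the main obstacle to be the ``only if'' direction, and within it the two commutative-algebra facts: that $p\mid\operatorname{disc}\mathcal{O}$ is equivalent to non-reducedness of $A_p$ (which is where perfectness of $\mathbb{F}_p$ and the \'etale/nondegenerate-trace-form dictionary enter), and pinning down the precise trace congruence modulo $p\mathcal{O}$ that any sum of $k$-th powers must satisfy — this congruence is what makes a single $2\times 2$ diagonal matrix enough to witness the failure. The only additional subtlety for $k=4$ is remembering that fourth powers are squares, so that the obstruction is detected modulo $2\mathcal{O}$ rather than modulo $4\mathcal{O}$.
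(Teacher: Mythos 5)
Your argument is correct. Note that the paper never proves Theorem~\ref{th_dis_23457}: it is quoted as a known result of Katre and the second author, so the relevant comparison is with the paper's own proofs of the analogous $k=6,8$ discriminant criteria in Section~5. There your skeleton matches: sufficiency follows from the trace criterion (Theorem~\ref{th_tr_23457}) once one knows that $(p,\mbox{disc }\mathcal{O})=1$ forces every element of $\mathcal{O}$ to be a $p$-th power modulo $p\mathcal{O}$. The paper imports that equivalence wholesale as Lemma~\ref{le_power} (from \cite{ag}), whereas you re-derive it: nondegeneracy mod $p$ of the trace form, \'etale equals reduced over the perfect field $\mathbb{F}_p$, and injectivity-implies-surjectivity of Frobenius on a finite ring; that sketch is sound and is exactly the content of the cited lemma. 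For necessity, the paper's $k=6,8$ arguments fall back on the already-known lower-power cases, while you prove it directly from the congruence $\operatorname{Tr}(N^p)\equiv(\operatorname{Tr}N)^p\pmod{p\mathcal{O}}$ (a correct sharpening of Proposition~\ref{prop_traceps} with $s=1$, via the multinomial-coefficient argument) applied to $N_i=M_i^{k/p}$ with $k/p\in\{1,2\}$; this makes the theorem self-contained instead of resting on Vaserstein's and Richman's necessity statements. Two cosmetic blemishes only: your test matrix $\operatorname{diag}(t,0,\dots,0)$ should be placed in $M_n(\mathcal{O})$ rather than $M_2(\mathcal{O})$, and in the $k=4$ sufficiency step one should write $\operatorname{Tr}M-x_0^4=2y$ with $y\in\mathcal{O}$ (legitimate since $\mathcal{O}$ is $\mathbb{Z}$-torsion-free) rather than dividing by $2$; neither affects correctness.
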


In this paper we will extend the results of the above type to sixth and eighth powers. Note here
that the case of sixth powers is the first non-prime power, composite number case under consideration. 
In particular, we shall derive the trace and discriminant criteria for a matrix $M$ to be a sum of sixth (and eighth)
powers of matrices over a commutative ring $R$ with unity and over an order $\mathcal{O},$ of an algebraic number field.
The results obtained are as follows: 

\begin{theorem} 
  Let $R$ be a commutative ring with unity and let $n \geq 2$ be an integer. 
  \begin{itemize}
  \item[(a)] A matrix $M \in M_2(R)$ is a sum of sixth powers in $M_2(R)$ if and only if Tr $M \pmod{6R} \in S,$ where
    $S=\left\lbrace x_0^6-2x_1^3+3x_2^2\pmod{6R}\;:\; x_0,x_1,x_2\in R\right\rbrace.$
    If $M \in M_n(R)$ is such that Tr $M \pmod{6R} \in S,$ then $M$ is a sum of sixth powers
    in $M_n(R).$ 

    Further, if $\mathcal{O}$ is an order in an algebraic number field, every matrix in $M_n(\mathcal{O})$
    is a sum of sixth powers of matrices if and only if $(6, {\rm~disc}(\mathcal{O})) = 1.$ 
    
    \item[(b)] A matrix $M \in M_n(R)$ is a sum of eighth powers in $M_n(R)$ if and only if Tr $M \pmod{8R} \in S_1,$ where
      $S_1 =\left\lbrace x_0^8 + 2x_1^4 + 4x_2^2 \pmod{8R}\;:\; x_0,x_1,x_2\in R\right\rbrace.$
      Further, if $\mathcal{O}$ is an order in an algebraic number field, every matrix in $M_n(\mathcal{O})$
    is a sum of eighth powers of matrices if and only if $(2, {\rm~disc}(\mathcal{O})) = 1.$ 
  \end{itemize} 
\end{theorem}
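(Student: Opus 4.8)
The plan is to reduce both parts to the $2\times2$ case and to the identification of two subgroups of $(R,+)$. For $k\in\{6,8\}$ let $G_k$ be the subgroup of $(R,+)$ generated by the traces of $k^{th}$ powers of matrices in $M_2(R)$. By Theorem \ref{th_kg}, a matrix $M\in M_2(R)$ is a sum of $k^{th}$ powers in $M_2(R)$ if and only if $\mathrm{Tr}\,M\in G_k$; and by Remark \ref{rem_thm3.5} (i.e. Theorem \ref{th_mn}), once the $2\times2$ criterion is established the ``if'' direction for every $n\ge2$ is automatic, since ``$\mathrm{Tr}\,M\pmod{6R}\in S$'' (resp. ``$\pmod{8R}\in S_1$'') is a condition on the trace alone. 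So the core of the proof is to show $G_6=S+6R$ and $G_8=S_1+8R$; the ``only if'' direction for $n>2$ and the discriminant statements are handled afterwards.

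To compute $G_k$ I would use Cayley--Hamilton: for $A\in M_2(R)$ with $t=\mathrm{Tr}\,A$, $d=\det A$ one has $A^2=tA-dI$, and iterating gives $\mathrm{Tr}(A^6)=t^6-6t^4d+9t^2d^2-2d^3$ and $\mathrm{Tr}(A^8)=t^8-8t^6d+20t^4d^2-16t^2d^3+2d^4$. Modulo $6R$ (resp. $8R$) these read $t^6-2d^3+3(td)^2$ (resp. $t^8+2d^4+4(t^2d)^2$), so every generator of $G_6$ lies in $S$ and every generator of $G_8$ lies in $S_1$; hence $G_6\subseteq S+6R$, $G_8\subseteq S_1+8R$ once one knows $S\pmod{6R}$ and $S_1\pmod{8R}$ are subgroups. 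For $S$ this follows from $R/6R\cong R/2R\times R/3R$ together with the Frobenius-type identities $a^2+b^2\equiv(a+b)^2\pmod{2R}$ and $a^3+b^3\equiv(a+b)^3\pmod{3R}$, which identify the two factors of $S$ with the squares of $R/2R$ and the cubes of $R/3R$. For $S_1$ one iterates the binomial congruences $a^2+b^2\equiv(a+b)^2\pmod{2R}$, $a^4+b^4\equiv(a+b)^4+2(ab)^2\pmod{4R}$, $a^8+b^8\equiv(a+b)^8+2(ab)^4+4\bigl(ab(a+b)^2\bigr)^2\pmod{8R}$, reducing a sum of two elements of $S_1$ to one; negatives follow from $-1\equiv7\pmod8$ and closure.

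For the reverse inclusions $S+6R\subseteq G_6$, $S_1+8R\subseteq G_8$ it suffices, $G_k$ being a group, to put $x_0^k$, $2x_1^{k/2}$, the middle terms $3x_2^2$ (resp. $4x_2^2$), and $6R$ (resp. $8R$) inside $G_k$. The first two are immediate: $x_0^k=\mathrm{Tr}(\mathrm{diag}(x_0,0)^k)$ and $2x_1^{k/2}=\pm\mathrm{Tr}(B^k)$ for $B$ of trace $0$ and determinant $x_1$. The rest is a finite manipulation of the trace polynomials above: substituting $t=1$, subtracting the values at $t=0$ and at $d=0$, and taking finite differences in $d$ yields for $k=6$ successively $9d^2-6d$, then $18R$, then $12R$, hence $6R\subseteq G_6$ and then $3d^2\in G_6$; the analogous but longer computation for $k=8$ gives $16R$, then $8R\subseteq G_8$, and finally (using $\mathrm{Tr}(A^8)-\mathrm{Tr}(A^8)|_{d=0}-\mathrm{Tr}(A^8)|_{t=0}\equiv4t^4d^2\pmod{8R}$) $4d^2\in G_8$. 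This gives $G_6=S+6R$, $G_8=S_1+8R$, hence the $n=2$ trace criteria. For $n>2$, the ``only if'' direction: if $M=\sum M_i^k$ then $\mathrm{Tr}\,M=\sum\mathrm{Tr}(M_i^k)$ and, $S$ (resp. $S_1$) being a subgroup, it is enough that $\mathrm{Tr}(N^k)\pmod{kR}$ lies in it for every $N\in M_n(R)$. For $k=6$ this follows from $\mathrm{Tr}(B^2)\equiv(\mathrm{Tr}\,B)^2\pmod{2R}$ and $\mathrm{Tr}(B^3)\equiv(\mathrm{Tr}\,B)^3\pmod{3R}$ (the off-diagonal contributions to $\mathrm{Tr}(B^j)$ fall into orbits of size $j$ under cyclic rotation of indices), so $\mathrm{Tr}(N^6)=\mathrm{Tr}((N^3)^2)$ is a square mod $2R$ and $\mathrm{Tr}((N^2)^3)$ is a cube mod $3R$. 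For $k=8$ one writes $\mathrm{Tr}(B^2)=(\mathrm{Tr}\,B)^2-2e_2(B)$ with $e_2(B)\in R$ the second characteristic coefficient, applies it to $N,N^2,N^4$, and expands modulo $8R$; I expect this mod-$8R$ trace congruence to be the most delicate step.

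Finally, the discriminant criteria. Since every $r\in\mathcal{O}$ equals $\mathrm{Tr}\,\mathrm{diag}(r,0,\dots,0)$, every matrix in $M_n(\mathcal{O})$ is a sum of $k^{th}$ powers if and only if the trace criterion holds for every element of $\mathcal{O}$, i.e. iff $S=\mathcal{O}/6\mathcal{O}$ (resp. $S_1=\mathcal{O}/8\mathcal{O}$). By the description of $S$ this holds iff $z\mapsto z^2$ is onto $\mathcal{O}/2\mathcal{O}$ and $z\mapsto z^3$ is onto $\mathcal{O}/3\mathcal{O}$; for $S_1$ it holds iff $z\mapsto z^2$ is onto $\mathcal{O}/2\mathcal{O}$ (the layers modulo $4$ and $8$ are then filled by a Hensel-type lift). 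For a prime $p$, the $p$-power map on the finite $\mathbb{F}_p$-algebra $\mathcal{O}/p\mathcal{O}$ is surjective iff that algebra is reduced, which is equivalent to nondegeneracy of the trace form modulo $p$, i.e. to $p\nmid\mathrm{disc}(\mathcal{O})$. Hence every matrix over $\mathcal{O}$ is a sum of sixth powers iff $(6,\mathrm{disc}(\mathcal{O}))=1$ and a sum of eighth powers iff $(2,\mathrm{disc}(\mathcal{O}))=1$.
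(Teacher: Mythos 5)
Your overall strategy is the paper's: reduce everything to the statement that Tr $M$ lies in the subgroup of $R$ generated by traces of $k$-th powers (Theorem \ref{th_kg}), compute Tr$(A^6)$ and Tr$(A^8)$ for $2\times 2$ matrices from the characteristic polynomial (your two polynomials in $t,d$ agree with Corollary \ref{cor_tr}), show $S$ and $S_1$ are subgroups, and then realize $6R$ and $3x^2$ (resp.\ $8R$ and $4x^2$) as sums of traces. Two of your choices are genuinely cleaner than the paper's: describing $S \bmod 6R$ through $R/6R\cong R/2R\times R/3R$ as (squares mod $2$) $\times$ (cubes mod $3$) replaces the paper's ad hoc closure identities for $S$ and makes both the $n\times n$ ``only if'' for $k=6$ and the sixth-power discriminant criterion nearly immediate; and your argument via surjectivity of Frobenius on the finite $\mathbb{F}_p$-algebra $\mathcal{O}/p\mathcal{O}$ is exactly the content of the cited Lemma \ref{le_power}, so you could simply invoke it.

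The gaps sit exactly where the paper does its real work. First, the inclusions $6R\subseteq G_6$ and especially $8R\subseteq G_8$, $4c^2\in G_8$ (in your notation $G_k$ for the subgroup generated by traces of $k$-th powers) are not formal consequences of the shape of the trace polynomial: they are what the paper's explicit decompositions, namely the displayed identity for $6b$ and the tables of matrices $P_1,\dots,P_4$ and $M_1,\dots,M_{11}$, exist to establish. Your finite-difference route does close for $k=6$ (with $g(d)=9d^2-6d\in G_6$ one gets $g(d+1)-g(d)-g(1)=18d$ and $g(2d)-4g(d)=12d$, hence $6R\subseteq G_6$ and then $3d^2\in G_6$), but for $k=8$ you only assert that an ``analogous but longer computation'' yields $16R$, then $8R$, then $4d^2$; until those combinations are exhibited the eighth-power criterion is not proved, and this is the bulk of the paper's proof of Theorem \ref{thm_tr8}. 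Second, the ``only if'' direction of part (b) for $n>2$ needs Tr$(N^8)\equiv x_0^8+2x_1^4+4x_2^2\pmod{8R}$ for an arbitrary $n\times n$ matrix $N$; you propose to derive this from $\mathrm{Tr}(B^2)=(\mathrm{Tr}\,B)^2-2e_2(B)$ applied to $N$, $N^2$, $N^4$, and yourself flag it as the most delicate step without completing it (one would still need congruences such as $e_2(N^2)\equiv e_2(N)^2\pmod{2R}$). This is precisely Richman's Proposition \ref{prop_traceps} with $p=2$, $s=3$, which the paper cites; doing the same closes that gap at no cost.
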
 

We would like to mention here that recently S. A. Katre, Wadikar K. G. and Deepa Krishnamurthi (see \cite{kshk} and
\cite{kd}) have obtained various results for matrices over non-commutative rings too as sums of powers of matrices.

\section{Formula for trace of $A^n; A\in M_2(R)$}
Let $R$ be a commutative ring with unity and let $A=\begin{pmatrix} a & b \\ c & d \end{pmatrix} \in M_2(R).$
The following theorem due to Mc Laughlin J, gives the general formula for $A^n,$ for any positive integer $n.$ 
\begin{theorem}[\cite{la}, Theorem 1]\label{thm_A_raise_n}
  Let $R$ be a commutative ring with unity. Let $A=\begin{pmatrix} a & b \\ c & d \end{pmatrix} \in M_2(R).$
  Let $t =a+d$ and $\delta=ad-bc$ be the trace and determinant of $A$ respectively. Define
  $\displaystyle y_n=\sum_{r=0}^{\lfloor n/2\rfloor} \binom{n-r}{r} t^{n-2r}(-\delta)^r,$ where $\lfloor x\rfloor$ denotes the greatest integer $z$ such that $z\le x.$ Then for any positive integer
  \nolinebreak $n,$ 
	\[ A^n =\begin{pmatrix} y_n-dy_{n-1} & by_{n-1} \\ cy_{n-1} & y_n-ay_{n-1} \end{pmatrix}.\]
\end{theorem}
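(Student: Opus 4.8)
The plan is to prove the matrix formula by induction on $n$, with the combinatorial heart of the argument being a three-term recurrence satisfied by the scalar quantities $y_n$.

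First I would establish that the sequence $(y_n)$ obeys the recurrence $y_{n+1} = t\,y_n - \delta\,y_{n-1}$ for all $n \ge 1$, together with the initial values $y_0 = 1$ and $y_1 = t$ read off directly from the defining sum. This is the step I expect to be the main obstacle, since it requires carefully manipulating the binomial coefficients and tracking the summation limits $\lfloor n/2 \rfloor$ across both parities of $n$. Writing $t\,y_n$ and $-\delta\,y_{n-1}$ as sums over the same terms $t^{n+1-2r}(-\delta)^r$ — after shifting the index of the second sum by one — the coefficient of each term becomes $\binom{n-r}{r} + \binom{n-r}{r-1}$, which Pascal's rule collapses to $\binom{n+1-r}{r}$, exactly the coefficient appearing in $y_{n+1}$. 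The only care needed is to check that the ranges of summation agree with $\lfloor (n+1)/2 \rfloor$ and that the boundary terms (at $r=0$ and at the top of the range) match; this is routine but must be carried out separately for $n$ even and $n$ odd.

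With the recurrence in hand, I would verify the base case $n=1$: substituting $y_0=1$ and $y_1=t$ into the right-hand side gives $\begin{pmatrix} t-d & b \\ c & t-a \end{pmatrix} = A$, since $t-d = a$ and $t-a = d$. For the inductive step I would assume the formula for $A^n$ and compute $A^{n+1} = A^n \cdot A$ by direct multiplication. Each of the four entries then simplifies using $\delta = ad - bc$; for instance the top-left entry becomes $a\,y_n - \delta\,y_{n-1}$, which equals $y_{n+1} - d\,y_n$ precisely because of the recurrence, while the off-diagonal entries collapse immediately to $b\,y_n$ and $c\,y_n$. Thus every entry matches the claimed form for $A^{n+1}$, closing the induction.

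A slightly more conceptual alternative is to invoke the Cayley--Hamilton theorem, which gives $A^2 = tA - \delta I$, so that every power of $A$ lies in the $R$-span of $I$ and $A$; writing $A^n = y_{n-1}A + (y_n - t\,y_{n-1})I$ and multiplying by $A$ reduces the full matrix identity to the same scalar recurrence, after which one reads off the entrywise form. Either way, the combinatorial identity of the first paragraph is what drives the whole argument, and once it is secured the matrix statement follows by a short induction.
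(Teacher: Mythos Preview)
Your argument is correct: the three-term recurrence $y_{n+1}=t\,y_n-\delta\,y_{n-1}$ follows from Pascal's rule exactly as you outline (with the boundary checks for the two parities of $n$ going through), and once that is in hand the induction on $A^n$ via $A^{n+1}=A^nA$ closes cleanly in all four entries. The Cayley--Hamilton variant you mention is equivalent and equally valid.

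However, there is nothing to compare against: the paper does not prove this theorem. It is quoted verbatim as Theorem~1 of Mc~Laughlin's paper \cite{la} and used as a black box to derive the trace formula in the subsequent corollary. So your proposal is not an alternative to the paper's proof but rather a self-contained proof of a result the authors simply cite. If your goal is to make the exposition independent of \cite{la}, your write-up does that; if your goal is to match the paper, you should instead just cite the source as the authors do.
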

Based on this we have the following corollary.
\begin{corollary} \label{cor_tr}
	Using notation defined as above, \[\text{Tr } A^n=t^n+\sum_{r=1}^{[n/2]} (-1)^r \frac{n}{r}\binom{n-r-1}{r-1} t^{n-2r}\delta^r.\]
\end{corollary}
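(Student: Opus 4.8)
The plan is to read off the trace directly from Mc Laughlin's formula (Theorem \ref{thm_A_raise_n}) and then resum. From the displayed expression for $A^n$ the two diagonal entries are $y_n - dy_{n-1}$ and $y_n - ay_{n-1}$, so
\[\text{Tr } A^n = 2y_n - (a+d)y_{n-1} = 2y_n - t\,y_{n-1}.\]
Thus the whole problem reduces to simplifying the single combination $2y_n - t\,y_{n-1}$ using the definition $y_m = \sum_{r=0}^{\lfloor m/2\rfloor}(-1)^r\binom{m-r}{r}t^{m-2r}\delta^r$, where I have written $(-\delta)^r=(-1)^r\delta^r$ to match the signs in the asserted formula.

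Next I would substitute. Multiplying the formula for $y_{n-1}$ by $t$ shifts its power of $t$ to $t^{n-2r}$, matching the powers appearing in $2y_n$, so both sums become sums of $(-1)^r t^{n-2r}\delta^r$. Adopting the convention $\binom{m}{r}=0$ whenever $r>m$, the term $\binom{n-1-r}{r}$ already vanishes for $r>\lfloor(n-1)/2\rfloor$, which lets me extend the second sum up to $r=\lfloor n/2\rfloor$ without changing its value and avoids a separate discussion of the parity of $n$. Collecting, the coefficient of $(-1)^r t^{n-2r}\delta^r$ becomes $2\binom{n-r}{r}-\binom{n-1-r}{r}$. For $r=0$ this equals $2\binom{n}{0}-\binom{n-1}{0}=1$, giving the leading term $t^n$, and it remains to identify this coefficient for $1\le r\le\lfloor n/2\rfloor$ with $\frac{n}{r}\binom{n-r-1}{r-1}$.

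The main (and essentially only) computational step is therefore the binomial identity
\[2\binom{n-r}{r}-\binom{n-1-r}{r}=\frac{n}{r}\binom{n-r-1}{r-1}, \qquad 1\le r\le \lfloor n/2\rfloor.\]
For $1\le r\le\lfloor(n-1)/2\rfloor$ (so that $n-2r\ge 1$) I would prove it by applying Pascal's rule $\binom{n-r}{r}=\binom{n-r-1}{r}+\binom{n-r-1}{r-1}$ to rewrite the left side as $\binom{n-r-1}{r}+2\binom{n-r-1}{r-1}$, then factoring out $\frac{(n-r-1)!}{(r-1)!(n-2r-1)!}$ and combining the two remaining factors via $\frac{1}{r}+\frac{2}{n-2r}=\frac{n}{r(n-2r)}$; reabsorbing $(n-2r)$ into the factorial yields exactly $\frac{n}{r}\binom{n-r-1}{r-1}$. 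The remaining case $r=n/2$ with $n$ even must be checked on its own, since $n-2r=0$ there: the left side is $2\binom{n/2}{n/2}-0=2$ and the right side is $\frac{n}{n/2}\binom{n/2-1}{n/2-1}=2$, so they agree. Every step is an exact identity with integer coefficients, hence remains valid after specializing to the ring elements $t,\delta$ in any commutative ring with unity; substituting back into $2y_n-t\,y_{n-1}$ completes the proof. The only delicate point is precisely this bookkeeping of the top summation index, which is why I isolate the even case rather than relying on the fractional manipulation uniformly.
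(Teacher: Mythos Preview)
Your proof is correct and follows essentially the same approach as the paper: compute $\text{Tr }A^n=2y_n-t\,y_{n-1}$, identify the coefficient $2\binom{n-r}{r}-\binom{n-1-r}{r}$, and show it equals $\frac{n}{r}\binom{n-r-1}{r-1}$. The only cosmetic differences are that the paper splits into the cases $n$ odd and $n$ even from the outset (rather than using the convention $\binom{m}{r}=0$ for $r>m$ and isolating $r=n/2$ at the end) and verifies the binomial identity by direct factorial manipulation instead of first applying Pascal's rule.
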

\begin{proof} From Theorem \eqref{thm_A_raise_n},
	\begin{align}
	\text{Tr }A^n=\ & 2y_n-(a+d)y_{n-1} \nonumber \\
	=& \sum_{r=0}^{\lfloor n/2\rfloor} 2\binom{n-r}{r}t^{n-2r}(-\delta)^r -t  \sum_{r=0}^{\lfloor (n-1)/2\rfloor} \binom{n-1-r}{r}t^{n-1-2r}(-\delta)^r, \nonumber \\
	=\ & t^n+ \left(\sum_{r=1}^{\lfloor n/2\rfloor} 2\binom{n-r}{r}-\sum_{r=1}^{\lfloor (n-1)/2\rfloor} \binom{n-1-r}{r}\right) t^{n-2r}(-\delta)^r \label{eq_trAn}
	\end{align}
	Let us assume $n=2l+1$ is odd. Then $\lfloor n/2\rfloor =l = \lfloor (n-1)/2\rfloor$. In this case,
	\begin{align}
	\mbox{Tr }A^n=\ & t^n+ \sum_{r=1}^{\lfloor n/2\rfloor} \left( 2\binom{n-r}{r}-\binom{n-1-r}{r}\right) t^{n-2r}(-\delta)^r \nonumber 
	\end{align}	
	Consider,
        $$2\binom{n-r}{r}- \binom{n-1-r}{r} = 2\frac{(n-r)!}{r!(n-2r)!}-\frac{(n-r-1)!}{r!(n-1-2r)!}$$ 
	$$= \frac{(n-r-1)!}{r!(n-1-2r)!}\left( 2\frac{n-r}{n-2r}-1\right) =  n \frac{(n-r-1)!}{r!(n-2r)!},\nonumber $$
	$$ = \frac{n}{r} \frac{(n-r-1)!}{(r-1)!(n-2r)!}	= \frac{n}{r} \binom{n-1-r}{r-1}.$$ 
	Substituting back in Equation \eqref{eq_trAn} we get desired expression.
	
	Now assume $n =2l$ be even number. Then $\lfloor n/2\rfloor=l$ and $\lfloor (n-1)/2\rfloor =l-1.$ Substituting in Equation \eqref{eq_trAn} we get,
	\begin{align*}
	\mbox{Tr }A^n =\ & t^n + \left(\sum_{r=1}^{l-1} 2\binom{n-r}{r}-\sum_{r=1}^{l-1} \binom{n-1-r}{r}\right) t^{n-2r}(-\delta)^r +2\binom{n-l}{l}(-\delta)^l \\
	=\ & t^n +\sum_{r=1}^{l-1}\frac{n}{r} \binom{n-1-r}{r-1}t^{n-2r}(-\delta)^r +\frac{n}{l} \binom{n-1-l}{l-1}(-\delta)^l \\
        =\ & t^n +\sum_{r=1}^{\lfloor n/2\rfloor}\frac{n}{r} \binom{n-1-r}{r-1}t^{n-2r}(-\delta)^r.
	\end{align*}
\end{proof}

\section{Trace Criterion for a Matrix to be a Sum of Sixth Powers}

We prove in this section, trace related results regarding sixth powers of matrices.
This is the first composite, non-prime power under consideration. We begin with a lemma. 

\begin{lemma}
  Let $M \in M_2(R).$ Then, $M = B^{6},$ for some $B\in M_2(R)$ if and only if there exist
  $t,\delta \in R$ such that Tr $M = t^6-6t^4\delta+9t^2\delta^2-2\delta^3.$
\end{lemma}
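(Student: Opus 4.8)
The plan is to derive both implications from Corollary \ref{cor_tr} specialised to $n=6$. First I would spell out that case: for $A\in M_2(R)$ with $t=\mathrm{Tr}\,A$ and $\delta=\det A$, Corollary \ref{cor_tr} gives
\[
\mathrm{Tr}\,A^{6}=t^{6}+\sum_{r=1}^{3}(-1)^{r}\frac{6}{r}\binom{5-r}{r-1}t^{6-2r}\delta^{r}.
\]
Evaluating the three coefficients, $\tfrac{6}{1}\binom{4}{0}=6$, $\tfrac{6}{2}\binom{3}{1}=9$ and $\tfrac{6}{3}\binom{2}{2}=2$, carried by the signs $-,+,-$ respectively, collapses this to
\[
\mathrm{Tr}\,A^{6}=t^{6}-6t^{4}\delta+9t^{2}\delta^{2}-2\delta^{3},
\]
which already contains essentially all of the lemma; the only genuine work above is the short binomial check.

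For the forward direction, if $M=B^{6}$ with $B\in M_2(R)$, set $t=\mathrm{Tr}\,B$ and $\delta=\det B$; the displayed identity applied to $A=B$ gives $\mathrm{Tr}\,M=t^{6}-6t^{4}\delta+9t^{2}\delta^{2}-2\delta^{3}$, exactly the stated form. For the converse, given $t,\delta\in R$ with $\mathrm{Tr}\,M=t^{6}-6t^{4}\delta+9t^{2}\delta^{2}-2\delta^{3}$, I would pick the companion-type matrix $B=\begin{pmatrix}0&-\delta\\1&t\end{pmatrix}\in M_2(R)$, which has $\mathrm{Tr}\,B=t$ and $\det B=\delta$, so that $\mathrm{Tr}\,B^{6}=t^{6}-6t^{4}\delta+9t^{2}\delta^{2}-2\delta^{3}=\mathrm{Tr}\,M$ by the identity. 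Since the trace is the only invariant named in the statement, it then remains to pass from ``$B^{6}$ has the trace of $M$'' to ``$M$ equals a sixth power'': one normalises $M$ to cyclic form $\begin{pmatrix}0&-\det M\\1&\mathrm{Tr}\,M\end{pmatrix}$ and matches it entrywise against $B^{6}$ using the explicit shape of $B^{6}$ from Theorem \ref{thm_A_raise_n}.

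I expect this final matching to be the main obstacle. Corollary \ref{cor_tr} controls only $\mathrm{Tr}\,B^{6}$, whereas the equality $M=B^{6}$ additionally forces $\det M=\delta^{6}$ and fixes the off-diagonal entries, so the parameters $t,\delta$ and the normal form chosen for $M$ have to be reconciled. By contrast the trace identity itself --- the form in which the lemma feeds into the sum-of-sixth-powers results of the next section --- is entirely mechanical once Corollary \ref{cor_tr} is in hand.
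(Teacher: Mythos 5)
Your computation of $\mathrm{Tr}\,A^{6}=t^{6}-6t^{4}\delta+9t^{2}\delta^{2}-2\delta^{3}$ from Corollary \ref{cor_tr}, the forward direction, and the use of a companion-type matrix with prescribed trace $t$ and determinant $\delta$ are exactly the paper's argument (the paper uses $\bigl(\begin{smallmatrix} t & \delta \\ -1 & 0\end{smallmatrix}\bigr)$, which is the same device).

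The one place you go beyond the paper is the ``final matching'' you flag as the main obstacle: passing from $\mathrm{Tr}\,B^{6}=\mathrm{Tr}\,M$ to $M=B^{6}$. You are right to be suspicious, but the correct resolution is not to attempt the entrywise matching --- it cannot be done. The literal statement ``$M=B^{6}$'' is strictly stronger than any condition on $\mathrm{Tr}\,M$ alone: over $R=\Z$ the matrix $M=\bigl(\begin{smallmatrix}1&1\\1&0\end{smallmatrix}\bigr)$ has $\mathrm{Tr}\,M=1=t^{6}-6t^{4}\delta+9t^{2}\delta^{2}-2\delta^{3}$ with $t=1$, $\delta=0$, yet $\det M=-1$ is not a sixth power in $\Z$, so $M$ is not a sixth power of any integer matrix. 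The paper's own proof of the converse does not perform your proposed matching either; it stops exactly where you stop, concluding only that $\mathrm{Tr}\,M=\mathrm{Tr}\,B^{6}$. In other words, the lemma is really a statement about traces (``$\mathrm{Tr}\,M$ is the trace of a sixth power if and only if it has the displayed form''), and that is the only form in which it is used afterwards, via condition (2) of Theorem \ref{th_kg}. So your proof is complete for everything that is actually true and needed; do not try to close the gap you identified, and note instead that the statement as written overclaims.
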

\begin{proof}
  Let $M \in M_2(R)$ be such that $M = B^6$ for some $B\in M_2(R)$. If $t=\mbox{Tr }B$ and $\delta=\mbox{det }B,$
  by Corollary \ref{cor_tr}, we get that Tr $M = \text{Tr} B^6 = t^{6} - 6t^4\delta + 3(3)t^{4}\delta^2 - 2\delta^3$ 
  $= t^6-6t^4\delta+9t^2\delta^2-2\delta^3.$

  Conversely if $M\in M_2(R)$ is such that Tr $M= t^6-6t^4\delta+9t^2\delta^2-2\delta^3$ with 
  $t, \delta \in R$, defining $B= \begin{pmatrix} t & \delta \\ -1 & 0\end{pmatrix} $, by Corollary \ref{cor_tr}
  we immediately get that Tr $M=$ Tr $B^6=t^6-6t^4\delta+9t^2\delta^2-2\delta^3.$ 
\end{proof}

Motivated from the above lemma, given a ring $R,$ we shall consider the set of traces of sixth powers modulo $6R$
and in fact prove that it is a subgroup of $R.$ 

\begin{lemma}\label{lem_S_subgp}
  Let $R$ be a commutative ring with unity. Define the set
  $$S=\left\lbrace x_0^6-2x_1^3+3x_2^2\pmod{6R}\;:\; x_0,x_1,x_2\in R\right\rbrace.$$
  Then $S$ is a subgroup of $R.$ 
\end{lemma}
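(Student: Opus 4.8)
The plan is to push the problem through the Chinese Remainder Theorem isomorphism $R/6R \cong R/2R \times R/3R$, which is available because $1 = 3 - 2 \in 2R + 3R$ and $2R \cap 3R = 6R$, and then to identify the image of $S$ explicitly. First I would reduce the defining expression $x_0^6 - 2x_1^3 + 3x_2^2$ modulo $2R$ and modulo $3R$ separately. Modulo $2R$ it becomes $x_0^6 + x_2^2 = (x_0^3 + x_2)^2$, using the identity $(a+b)^2 = a^2 + b^2$ valid in characteristic $2$; so the mod-$2R$ component of every element of $S$ is a square in $R/2R$. Modulo $3R$ it becomes $x_0^6 + x_1^3 = (x_0^2)^3 + x_1^3 = (x_0^2 + x_1)^3$, using $(a+b)^3 = a^3 + b^3$ in characteristic $3$; so the mod-$3R$ component is a cube in $R/3R$. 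Hence the image of $S$ in $R/2R \times R/3R$ is contained in $T_2 \times T_3$, where $T_2$ denotes the set of squares in $R/2R$ and $T_3$ the set of cubes in $R/3R$.

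Next I would check that this containment is actually an equality. Given $u, v \in R$, the choice $(x_0, x_1, x_2) = (0, v, u)$ produces the element $-2v^3 + 3u^2 \in S$, whose reduction modulo $2R$ is $u^2$ (since $-2 \equiv 0$ and $3 \equiv 1$) and whose reduction modulo $3R$ is $v^3$ (since $-2 \equiv 1$ and $3 \equiv 0$). Thus every pair consisting of a square in $R/2R$ and a cube in $R/3R$ occurs, so the image of $S$ under the isomorphism equals $T_2 \times T_3$.

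It then remains to observe that $T_2$ and $T_3$ are subgroups of the additive groups $R/2R$ and $R/3R$ respectively: $T_2$ contains $0$, is closed under addition because $a^2 + b^2 = (a+b)^2$ over $R/2R$, and is closed under negation because $-a^2 = a^2$ there; similarly $T_3$ contains $0$, satisfies $a^3 + b^3 = (a+b)^3$ over $R/3R$, and $-a^3 = (-a)^3$. A direct product of subgroups is a subgroup, so $T_2 \times T_3$ is a subgroup of $R/2R \times R/3R$, and pulling it back through the isomorphism shows that $S$ is a subgroup of $R/6R$ (which is the assertion of the lemma).

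I do not expect a genuine obstacle here: once one passes to the CRT factors, the sixth-, cube- and square-power maps all become additive, and everything reduces to these two instances of the ``freshman's dream''. The only step requiring a little care is the surjectivity claim of the second paragraph — one must verify that $S$ maps \emph{onto} $T_2 \times T_3$ and not merely into it, since it is this equality, rather than the mod-$2$ and mod-$3$ analyses taken in isolation, that actually yields the subgroup property.
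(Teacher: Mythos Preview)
Your argument is correct and takes a genuinely different route from the paper. The paper proceeds by a direct computation inside $R/6R$: it records the identities $-2(x+y)^3\equiv -2x^3-2y^3$, $3(z+w)^2\equiv 3z^2+3w^2$, and $(s+t)^6\equiv s^6+t^6+3(st^2+s^2t)^2+2s^3t^3$ modulo $6R$, and then combines them to exhibit explicit $z_0,z_1,z_2$ with $\alpha+\beta\equiv z_0^6-2z_1^3+3z_2^2$; inverses come from $6S=0$. You instead pass through the Chinese Remainder isomorphism $R/6R\cong R/2R\times R/3R$, identify the image of $S$ as exactly $T_2\times T_3$ (squares mod $2$, cubes mod $3$), and use the additivity of the Frobenius in characteristics $2$ and $3$ to conclude each factor is a subgroup. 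The paper's approach is fully constructive, which dovetails with its later explicit trace decompositions; your approach is more conceptual, makes transparent why the primes $2$ and $3$ enter, and would generalise cleanly to other squarefree moduli. Your caution about needing surjectivity onto $T_2\times T_3$ (not merely containment) is well placed, and your witness $(x_0,x_1,x_2)=(0,v,u)$ settles it.
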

\begin{proof}
  It is enough to  prove  $S$ is closed under addition and every element of $S$ has an additive inverse.

  In order to prove that $S$ is closed under addition, we note the identities: 
        \begin{description}
	\item[(i)] $(x+y)^3 = x^3+3x^2y+3yx^2+y^3$. Hence $-2(x+y)^3 \equiv -2x^3-2y^3\pmod{6R}$.
	\item[(ii)] $(z+w)^2=z^2+2zw+w^2.$ Hence $3(z+w)^2\equiv 3z^2+3w^2\pmod{6R}.$
	\item[(iii)] Using the above two identities, we see that
          $\ds (s+t)^6= s^6+ 6s^5t + 15s^4t^2 + 20s^3t^3 + 15s^2t^4 + 6st^5+ t^6 
	\equiv s^6+ t^6+ 3(st^2+ s^2t)^2+ 2s^3t^3 \pmod{6R}.$
\end{description}

We now prove that $S$ is closed under addition as follows: let $\alpha, \beta \in S.$
  Hence, there exist $x_0, x_1, x_2, y_0, y_1, y_2 \in R$ such that
  $\alpha = x_0^6-2x_1^3+3x_2^2\pmod{6R}$ and $\beta =  y_0^6- 2y_1^3 + 3y_2^2\pmod{6R}.$
  Hence using the above relations we get that
  $$\alpha + \beta \pmod{6R} \equiv (x_0^6-2x_1^3+3x_2^2) + (y_0^6- 2y_1^3 + 3y_2^2) \pmod{6R}$$ 
  $$\equiv (x_0+y_0)^6 - 2(x_0y_0)^3 + 3((x_0^2y_0)^2 + (x_0y_0^2)^2) -2(x_1+y_1)^{3} + 3(x_2 + y_2)^2 \pmod{6R}$$ 
   $$\equiv (x_0+y_0)^6 - 2(x_0y_0 + x_1+ y_1)^3 + 3(x_0^2y_0 + x_0y_0^2 + x_2 + y_2)^2 \pmod{6R}.$$ 
  Choose $z_0 = x_0 + y_0, z_1 = x_0y_0 + x_1 + y_1, z_2 = x_0^2y_0 + x_0y_0^2 + x_2 + y_2,$ to get that
  $\alpha + \beta = z_0^6- 2z_1^3 + 3z_2^2\pmod{6R}.$ This completes the proof that $S$ is closed
  under addition. 

  Note now that for any $a \in S,\ 2a,3a, \ldots, 6a \in S$. As $6a=0 $ in $S.$ So $a+5a=0$ in $S$ that is,
  $-a = 5a$ in $S$. Hence every element of $S$ has an additive inverse. Therefore $S$ is a subgroup of $R.$
\end{proof}

\begin{theorem}\label{thm_tr6}
  Let $R$ be a commutative ring with unity. Then $M\in M_2(R)$ is a sum of sixth powers of matrices in $M_2(R)$
  if and only if Tr $M \pmod{6R} \in S.$
\end{theorem}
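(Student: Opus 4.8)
The plan is to obtain Theorem~\ref{thm_tr6} directly from Theorem~\ref{th_kg}, applied with $n=2$ and $k=6$: that result says $M\in M_2(R)$ is a sum of sixth powers in $M_2(R)$ if and only if $\mathrm{Tr}\,M$ lies in the subgroup $G\le R$ generated by the traces of sixth powers of matrices in $M_2(R)$. By Corollary~\ref{cor_tr} one has $\mathrm{Tr}\,B^6=P(t,\delta)$, where $t=\mathrm{Tr}\,B$, $\delta=\det B$ and $P(t,\delta):=t^6-6t^4\delta+9t^2\delta^2-2\delta^3$; and since the matrix $B=\begin{pmatrix} t & \delta\\ -1 & 0\end{pmatrix}$ (as in the preceding lemma) realises every pair $(t,\delta)\in R^2$, we get $G=\langle P(t,\delta):t,\delta\in R\rangle$. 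Writing $\pi\colon R\to R/6R$ for the quotient map, everything then reduces to the single set equality $G=\pi^{-1}(S)$: granting it, $M$ is a sum of sixth powers $\iff\mathrm{Tr}\,M\in G\iff\pi(\mathrm{Tr}\,M)\in S$, which is exactly the assertion.

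The inclusion $G\subseteq\pi^{-1}(S)$ is the ``only if'' half and is routine: modulo $6R$ we have $-6t^4\delta\equiv0$ and $9t^2\delta^2\equiv3(t\delta)^2$, so $P(t,\delta)\equiv t^6-2\delta^3+3(t\delta)^2\in S$. As $S$ is a subgroup by Lemma~\ref{lem_S_subgp}, and $\pi(G)$ is generated by these classes, $\pi(G)\subseteq S$, i.e. $G\subseteq\pi^{-1}(S)$.

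The real content is the reverse inclusion $\pi^{-1}(S)\subseteq G$. Since $S$ is generated as a group by the classes of $x_0^6$, of $-2x_1^3$ and of $3x_2^2$ over all $x_i\in R$, the preimage $\pi^{-1}(S)$ is generated by $6R$ together with all $a^6$, all $-2b^3$ and all $3c^2$, so it suffices to put these into $G$. The first two are free, as $a^6=P(a,0)$ and $-2b^3=P(0,b)$. For the others I plan to distil lower-degree elements from cubic combinations inside $G$: expanding $-2(\delta-1)^3$ gives $6\delta^2-6\delta=P(0,\delta-1)-P(0,\delta)-P(0,-1)\in G$, and $P(1,\delta)-P(1,0)-P(0,\delta)=9\delta^2-6\delta\in G$; subtracting the two yields $3\delta^2\in G$ for every $\delta\in R$ (which in particular covers the generators $3c^2$), and then $6\delta=3(\delta+1)^2-3\delta^2-3\in G$ (using $3=3\cdot1^2\in G$), so that $6R\subseteq G$. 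Combining the four facts gives $\pi^{-1}(S)\subseteq G$, hence $G=\pi^{-1}(S)$, and Theorem~\ref{th_kg} finishes the proof. I expect this last inclusion to be the only delicate point: the bookkeeping needed to promote each congruence ``$\equiv\pmod{6R}$'' to genuine membership in the subgroup $G$, and in particular the chain $-2b^3\to 6\delta^2-6\delta\to 3\delta^2\to 6\delta$, is where the argument has to be run carefully.
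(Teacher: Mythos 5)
Your proof is correct, but it takes a genuinely different route from the paper's. The paper uses implication $(2)\Rightarrow(1)$ of Theorem~\ref{th_kg} and therefore must exhibit $\mathrm{Tr}\,M$ as an honest \emph{sum} of traces of sixth powers: it writes $x^6-2y^3+3z^2+6b'$ term by term as traces of explicit companion-type matrices, the bulk of the work being a long chain of matrix identities showing that every element $6b$ is a sum (with nonnegative integer multiplicities) of traces of sixth powers. You instead invoke the subgroup condition $(3)$ of Theorem~\ref{th_kg}, which lets you subtract freely; combined with the observation that the traces of sixth powers in $M_2(R)$ are exactly the values $P(t,\delta)=t^6-6t^4\delta+9t^2\delta^2-2\delta^3$ (every pair $(t,\delta)$ being realised by $\begin{pmatrix} t & \delta\\ -1 & 0\end{pmatrix}$), the whole converse collapses to the subgroup equality $G=\pi^{-1}(S)$, and your identities $P(0,\delta-1)-P(0,\delta)-P(0,-1)=6\delta^2-6\delta$, $P(1,\delta)-P(1,0)-P(0,\delta)=9\delta^2-6\delta$, hence $3\delta^2\in G$ and then $6\delta=3(\delta+1)^2-3\delta^2-3\in G$, all check out. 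What your approach buys is brevity and transparency — three short polynomial identities replace the paper's table-driven computation — at the cost of being nonconstructive about the number of sixth powers needed; the paper's explicit decomposition implicitly tracks how many matrices are used, which is relevant to the second part of the Waring problem mentioned in the introduction. Both arguments rest on the same two external inputs (Theorem~\ref{th_kg} and Corollary~\ref{cor_tr}), and your ``only if'' direction coincides with the paper's.
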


\begin{proof}
  Let $M = \sum_{i=1}^{k} M_i^6,$ where $M, M_i \in M_2(R)$ for all $1 \leq i \leq k.$ Then,
  Tr $M \pmod{6R} \equiv \sum_{i=1}^{k} \text{Tr}~M_i^{6} \pmod{6R} = \sum_{i=1}^{k} (x_{i0}^6 - 2 x_{i1}^3 + 3x_{i2}^2) \pmod{6R}.$
  Since each term of this summation is in $S$ and $S$ is additively closed, from Lemma \ref{lem_S_subgp},
  we get that Tr $M \pmod{6R} \in S.$ 

  Assume $M \in M_2(R)$ with Tr $M \pmod{6R} \in S.$ 
  Therefore, 
	\begin{eqnarray}
	\text{Tr }M&=&x^6-2y^3+3z^2+6b'\ \text{ for some } b' \in R; \nonumber\\
	&=&x^6-2y^3+(2z^3+3z^2-6z+2)-2z^3-2+6z+6b', \nonumber \\
	&=&\text{Tr }\begin{pmatrix} x & 0 \\ 0 & 0 \end{pmatrix}^6+ \text{Tr }\begin{pmatrix} 0 & y \\ -1 & 0 \end{pmatrix}^6 + \text{Tr }\begin{pmatrix} 1 & -z+1 \\ -1 & 0 \end{pmatrix}^6 + \text{Tr }\begin{pmatrix} 0 & z \\ -1 & 0 \end{pmatrix}^6\nonumber\\ && \ \ \ \ +\text{Tr }\begin{pmatrix} 0 & 1 \\ -1 & 0 \end{pmatrix}^6 + 6(z+b').\nonumber
\end{eqnarray}

        Hence in order to prove that Tr $M$ is a sum of traces of sixth
        powers of matrices it is enough to prove that every element of the
        type $6b,$ $b \in R$  is a sum of traces of sixth powers of matrices. So consider,
	\begin{eqnarray}
	6b &=& (2b^3+9b^2+6b+1)-2b^3-9b^2-1, \nonumber \\
	&=&(2b^3+9b^2+6b+1)-2b^3+(-12b^2+3b^2)-1, \nonumber \\
	&=&\text{Tr}\begin{pmatrix} 1 & -b \\ -1 & 0 \end{pmatrix}^6 +\text{Tr}\begin{pmatrix} 0 & b \\ -1 & 0 \end{pmatrix}^6 +
        2(2b^6+3b^4-6b^2+2)  \nonumber  \\ &&  +(2b^6-6b^4+3b^2+2)-6b^6-6-1,\nonumber
	\\
	&=&\text{Tr}\begin{pmatrix} 1 & -b \\ -1 & 0 \end{pmatrix}^6 +\text{Tr}\begin{pmatrix} 0 & b \\ -1 & 0 \end{pmatrix}^6 + 2\text{Tr}\begin{pmatrix} -1 & b^2-1 \\ 1 & 0 \end{pmatrix}^6 +\text{Tr}\begin{pmatrix} -b & b^2-1 \\ -1 & 0 \end{pmatrix}^6  \nonumber \\ && +3\text{Tr}\begin{pmatrix} 0 & b^2 \\ -1 & 0 \end{pmatrix}^6 -8+1, \nonumber\\
	&=&\text{Tr}\begin{pmatrix} 1 & -b \\ -1 & 0 \end{pmatrix}^6 +\text{Tr}\begin{pmatrix} 0 & b \\ -1 & 0 \end{pmatrix}^6 + 2\text{Tr }\begin{pmatrix} -1 & b^2-1 \\ 1 & 0 \end{pmatrix}^6 +\text{Tr}\begin{pmatrix} -b & b^2-1 \\ -1 & 0 \end{pmatrix}^6\nonumber \\ && +3\text{Tr}\begin{pmatrix} 0 & b^2 \\ -1 & 0 \end{pmatrix}^6+4\text{Tr}\begin{pmatrix} 0 & 1 \\ -1 & 0 \end{pmatrix}^6+\text{Tr}\begin{pmatrix} 1 & 0 \\ -1 & 0 \end{pmatrix}^6.\nonumber
	\end{eqnarray}

        Applying the above with $b$ replaced by $z+b',$ we get that Tr $M$ is a sum of traces of sixth powers of matrices.
        By the equivalent conditions of Theorem \eqref{th_kg} (see $(2)$ implies $(1)$),
        we get that $M$ is a sum of sixth powers of matrices.
\end{proof}

\begin{corollary}\label{cor_equaltwoenough} 
	Let $M\in M_n(R),$ $n \geq 2.$ If Tr $M \pmod{6R} \in S,$ then $M$ is a sum of sixth powers of matrices in $M_n(R).$ 
\end{corollary}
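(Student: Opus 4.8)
The plan is to deduce this from the $n=2$ case, Theorem~\ref{thm_tr6}, together with the trace criterion of Theorem~\ref{th_kg}; this is exactly the reduction recorded in Remark~\ref{rem_thm3.5}, and I would carry it out as follows.

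First I would observe that zero-padding is compatible with taking powers and traces. Namely, for $B\in M_2(R)$ let $\widetilde{B}=\begin{pmatrix} B & 0 \\ 0 & 0\end{pmatrix}\in M_n(R)$ (the block matrix with $B$ in the top-left $2\times 2$ corner and zeros elsewhere). Then $\widetilde{B}^{\,6}=\widetilde{B^6}$, so $\text{Tr}\,\widetilde{B}^{\,6}=\text{Tr}\,B^6$. Consequently every element of $R$ occurring as the trace of a sixth power of a $2\times 2$ matrix over $R$ also occurs as the trace of a sixth power of an $n\times n$ matrix over $R$, and the same holds for every finite sum of such traces.

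Next, suppose $M\in M_n(R)$ with $\text{Tr}\,M\pmod{6R}\in S$. Consider the $2\times 2$ matrix $M'=\begin{pmatrix}\text{Tr}\,M & 0 \\ 0 & 0\end{pmatrix}$, which satisfies $\text{Tr}\,M'=\text{Tr}\,M$, so $\text{Tr}\,M'\pmod{6R}\in S$ as well. By Theorem~\ref{thm_tr6}, $M'$ is a sum of sixth powers in $M_2(R)$, say $M'=\sum_i M_i^6$ with $M_i\in M_2(R)$; taking traces gives that $\text{Tr}\,M=\text{Tr}\,M'=\sum_i\text{Tr}\,M_i^6$ is a sum of traces of sixth powers of $2\times 2$ matrices over $R$. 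By the previous step (applying $\widetilde{\ \cdot\ }$ to each $M_i$), $\text{Tr}\,M$ is then a sum of traces of sixth powers of $n\times n$ matrices over $R$. Now invoke Theorem~\ref{th_kg} with $k=6$, implication $(2)\Rightarrow(1)$: since $\text{Tr}\,M$ is a sum of traces of sixth powers of matrices in $M_n(R)$, the matrix $M$ is itself a sum of sixth powers of matrices in $M_n(R)$.

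There is no genuine obstacle here; the argument is a routine reduction, and the only points to verify—that zero-padding preserves both sixth powers and traces, and that a decomposition of $M'$ into sixth powers yields, by taking traces, a decomposition of $\text{Tr}\,M$ into traces of sixth powers of $2\times 2$ matrices—are immediate. I would only flag one subtlety worth a sentence in the write-up: one cannot apply Theorem~\ref{th_mn} verbatim, since not every $2\times 2$ matrix is a sum of sixth powers, so it is the trace criterion of Theorem~\ref{th_kg} (equivalently, Remark~\ref{rem_thm3.5}) that makes this conditional passage from size $2$ to size $n$ legitimate.
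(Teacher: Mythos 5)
Your proof is correct and follows essentially the same route as the paper: reduce to the $2\times 2$ case via Theorem~\ref{thm_tr6}, zero-pad the resulting $2\times 2$ matrices to $n\times n$ ones with the same traces of sixth powers, and conclude with implication $(2)\Rightarrow(1)$ of Theorem~\ref{th_kg}. Your auxiliary matrix $M'$ is a nice touch that lets you cite Theorem~\ref{thm_tr6} as a black box rather than extracting the explicit trace decomposition from its proof, but the substance of the argument is identical.
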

\begin{proof}
  Let $M\in M_n(R)$ such that Tr $M \pmod{6R} \in S.$ Then there exist $x,y,z,b \in R$ such that Tr $M=x^6-2y^3+3z^2+6b.$
  By the theorem above, there exist $M_i\in M_2(R)$ such that $\sum_{i=1}^{r} \mbox{Tr }M_i^6 = x^6-2y^3+3x^2+6b.$ 
	For each $i$, define $N_i=\begin{pmatrix} M_i & 0 \\ 	0 & \ \ \ \ 0_{n-2}
	\end{pmatrix} \in M_n(R).$ Then Tr $N_i^6=\mbox{Tr }M_i^6$ and Tr $M= \sum_{i=1}^{r} \mbox{Tr }N_i^6.$
	Thus we get Tr $M$ is a sum of the traces of sixth powers of matrices over $R.$ Hence, $M$ is a sum of sixth powers
        in $M_n(R)$ by Theorem (\ref{th_kg}). 
\end{proof}

\section{Trace Criterion for a Matrix to be a Sum of Eighth Powers of Matrices}

In order to prove Theorem \eqref{th_dr}, Richman had used the theory of Witt vectors.
For any prime $p$ and integer $s\ge 1$, he had defined the set
$\displaystyle W(p,s,R)=\left\lbrace x_0^{p^s}+px_1^{p^{s-1}}+\cdots +p^sx_s\;:\; x_0,x_1,\ldots,x_s\in R\right\rbrace$
and proved the following results. 

\begin{proposition}[\cite{dr}, Proposition 3.1]\label{prop_powerps}
	$W(p,s,R)$ is closed under addition and subtraction.
\end{proposition}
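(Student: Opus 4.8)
The plan is to mimic exactly the argument used for the sixth-power set $S$ in Lemma~\ref{lem_S_subgp}, but now organized around the prime-power structure of $W(p,s,R)$. First I would reduce everything modulo $p^{s+1}R$, since the defining expression $x_0^{p^s}+px_1^{p^{s-1}}+\cdots+p^sx_s$ visibly only depends on $x_s$ modulo $p$, on $x_{s-1}$ modulo $p^2$, and so on; working modulo $p^{s+1}$ makes all the binomial-coefficient congruences below clean. The whole statement then splits into two tasks: show $W(p,s,R)$ is closed under addition, and show it is closed under negation. As in the $S$ case, closure under negation will be the easy half — once addition is established, repeated addition of a fixed element $a$ gives $2a,3a,\dots$, and I would exhibit a specific multiple $m$ (here $m=p^s$ after pinning down the order) with $ma=0$ in the quotient, so that $-a=(m-1)a\in W(p,s,R)$; actually it is cleaner to observe directly that $-a$ has the same shape, since $-p^s x_s$ is again of the form $p^s x_s'$ and the leading term $-x_0^{p^s}$ can be absorbed after adjusting lower terms, so I expect negation to follow formally from the addition identity applied to $a+(-a)=0$ together with a degenerate case.

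The heart of the matter is the addition identity. Given $\alpha=\sum_{i=0}^s p^i x_i^{p^{s-i}}$ and $\beta=\sum_{i=0}^s p^i y_i^{p^{s-i}}$, I want to produce $z_0,\dots,z_s$ with $\alpha+\beta\equiv\sum_{i=0}^s p^i z_i^{p^{s-i}}\pmod{p^{s+1}R}$. The key computational input is the congruence
$$(x+y)^{p^j}\equiv x^{p^j}+y^{p^j}+p\cdot(\text{a }p^{j-1}\text{-th power}) \pmod{p^{\,\text{high}}R},$$
more precisely that for $0\le i\le s$ one has $(x_i+y_i)^{p^{s-i}}\equiv x_i^{p^{s-i}}+y_i^{p^{s-i}}+p\,E_i$ modulo $p^{s-i+1}R$, where $E_i$ is itself expressible as a $p^{s-i-1}$-th power plus $p$ times something, etc. This is the standard fact underlying the definition of Witt addition: the "carry" terms $p^i E_i$ land in level $i+1$ and get folded into the choice of $z_{i+1}$. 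So the construction of the $z_i$ is recursive: $z_0=x_0+y_0$ absorbs the top level; the discrepancy $\alpha+\beta-(x_0+y_0)^{p^s}$ is divisible by $p$, and modulo $p^{s+1}$ it has the shape $p\cdot(\text{a sum fitting the }W(p,s-1,\cdot)\text{ pattern})$, so an induction on $s$ (with the $s=0$ case trivial, and $s=1$ reproducing the identity $(x+y)^p\equiv x^p+y^p+px^{?}\dots$ type relation) closes the loop.

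The main obstacle I anticipate is bookkeeping the "carries" correctly: writing $(x_i+y_i)^{p^{s-i}}$ via the binomial theorem, the intermediate coefficients $\binom{p^{s-i}}{r}$ are divisible by $p$ but not always by enough powers of $p$, so one must track the exact $p$-adic valuation $v_p\binom{p^{s-i}}{r}$ (which equals $s-i-v_p(r)$) and argue that after multiplying by $p^i$ every surviving cross term either vanishes modulo $p^{s+1}$ or can be rewritten as $p^{i+1}$ times a $p^{s-i-1}$-th power — the latter requiring another application of the same Frobenius-type congruence one level down. Making this recursion airtight is exactly where Richman invokes Witt-vector machinery; in a self-contained write-up I would instead prove the single auxiliary lemma "$a\equiv a'\pmod{pR}\implies a^{p^j}\equiv a'^{p^j}\pmod{p^{j+1}R}$" by induction on $j$, and then feed it into the recursive construction of the $z_i$ above. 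Everything else is formal.
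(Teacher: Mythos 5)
The paper gives no proof of this proposition: it is imported verbatim from Richman, and the sentence preceding it tells you how Richman proves it, namely via Witt vectors --- the existence of integer-coefficient polynomials $S_0,\dots,S_s$ with $w_n(S_0,\dots,S_n)=w_n(X)+w_n(Y)$ for the Witt polynomials $w_n(X)=X_0^{p^n}+pX_1^{p^{n-1}}+\cdots+p^nX_n$. Your plan to replace this by a hands-on carrying argument is therefore a genuinely different route, and it is a viable one (it amounts to the Dwork-lemma proof of the integrality of Witt addition). Several of your preliminary points are sound: reducing modulo $p^{s+1}R$ is legitimate because $W(p,s,R)+p^sR=W(p,s,R)$; negation does follow from addition via $-a=(p^s-1)a+p^s(-a)$, exactly as $-a=5a$ is used for $S$ in Lemma \ref{lem_S_subgp}; and the auxiliary congruence $a\equiv a'\pmod{pR}\Rightarrow a^{p^j}\equiv a'^{p^j}\pmod{p^{j+1}R}$ is the right engine.

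As written, however, the proof has a gap at its center. The assertion that the carry $C=\bigl((x_0+y_0)^{p^s}-x_0^{p^s}-y_0^{p^s}\bigr)/p$ again ``fits the $W(p,s-1,\cdot)$ pattern'' is not bookkeeping --- it is the entire content of the proposition, and it does not follow from the valuation $v_p\binom{p^s}{r}=s-v_p(r)$ together with the auxiliary lemma by any argument you actually give. (For $p=2$, $s=2$ one must produce the concrete identity $-C=(x_0y_0)^2+2\bigl(-x_0^3y_0-x_0y_0^3-2x_0^2y_0^2\bigr)$; nothing in your sketch generates such identities.) A second problem: ``the discrepancy is divisible by $p$, so divide by $p$'' is not available in an arbitrary commutative ring, since in the presence of $p$-torsion the quotient is not well defined. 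The repair for both issues is the same: run the recursion universally in the $p$-torsion-free ring $\Z[X_0,\dots,X_s,Y_0,\dots,Y_s]$, setting $g_n=w_n(X)+w_n(Y)$, $z_0=X_0+Y_0$ and $p^nz_n=g_n-\sum_{i=0}^{n-1}p^iz_i^{p^{n-i}}$, and prove divisibility of the right-hand side by $p^n$ by induction, using your auxiliary lemma applied to $\phi(z_i)\equiv z_i^p\pmod{p}$ where $\phi$ is the substitution $X_j\mapsto X_j^p$, $Y_j\mapsto Y_j^p$; then specialize the resulting integer polynomials to $R$. At that point you will have reconstructed the Witt addition polynomials, so your ``elementary'' proof and Richman's coincide; your outline stops just short of the induction that makes it work.
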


\begin{proposition}[\cite{dr}, Proposition 3.2]\label{prop_traceps} 
	Let $M\in M_n(R)$ and $n \geq 2.$  Then for any prime $p$ and integer $s\ge 1$, Tr $\ds M^{p^s} \in W(p,s,R).$
\end{proposition}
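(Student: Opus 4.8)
The plan is to exhibit $\operatorname{Tr}(M^{p^{s}})$ as a value of the Witt polynomial $w_{s}(x_{0},\dots,x_{s})=x_{0}^{p^{s}}+px_{1}^{p^{s-1}}+\dots+p^{s}x_{s}$, whose image over $R$ is by definition $W(p,s,R)$. Following Richman's use of Witt vectors, I would read this structure off the characteristic polynomial of $M$ inside the multiplicative group $1+tR[[t]]$. Set $g(t)=\det(1-tM)$; this is a polynomial of degree $\le n$ with coefficients in $R$ and constant term $1$, hence a unit in $R[[t]]$. The point is that the series $-t\,g'(t)/g(t)$ can be computed in two ways — one producing the traces $\operatorname{Tr}(M^{m})$, the other producing sums of the multiplicative Witt coordinates of $g$ — and equating them, then restricting to $m$ a power of $p$, forces the desired shape.

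First I would compute $-t\,g'(t)/g(t)$ directly. Using the adjugate identity $\frac{d}{dt}\det A=\operatorname{Tr}\!\big(\operatorname{adj}(A)\,A'\big)$ together with $\operatorname{adj}(1-tM)=g(t)\,(1-tM)^{-1}$ (valid because $g(t)$ is a unit in $R[[t]]$), one gets
\[
-\,\frac{t\,g'(t)}{g(t)}=t\,\operatorname{Tr}\!\big(M(1-tM)^{-1}\big)=t\,\operatorname{Tr}\!\Big(M\sum_{k\ge 0}t^{k}M^{k}\Big)=\sum_{m\ge 1}\operatorname{Tr}(M^{m})\,t^{m}.
\]
Next I would expand $g$ multiplicatively: writing $g(t)=1+a_{1}t+\dots+a_{n}t^{n}$ with $a_{i}\in R$ and solving recursively (the coefficient of $t^{d}$ determines $c_{d}\in R$ once $c_{1},\dots,c_{d-1}$ are known), one obtains elements $c_{1},c_{2},\dots\in R$ with $g(t)=\prod_{d\ge 1}(1-c_{d}t^{d})$ in $1+tR[[t]]$. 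Taking the logarithmic derivative of the product and summing $\frac{d\,c_{d}t^{d}}{1-c_{d}t^{d}}=d\sum_{k\ge 1}c_{d}^{k}t^{dk}$ gives
\[
-\,\frac{t\,g'(t)}{g(t)}=\sum_{m\ge 1}\Big(\sum_{d\mid m}d\,c_{d}^{m/d}\Big)t^{m},
\]
so that, comparing coefficients, $\operatorname{Tr}(M^{m})=\sum_{d\mid m}d\,c_{d}^{m/d}$ for every $m\ge 1$.

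It then remains only to specialise $m=p^{s}$. The divisors of $p^{s}$ are $1,p,p^{2},\dots,p^{s}$, so
\[
\operatorname{Tr}(M^{p^{s}})=\sum_{i=0}^{s}p^{i}\,c_{p^{i}}^{p^{s-i}}=c_{1}^{p^{s}}+p\,c_{p}^{p^{s-1}}+p^{2}\,c_{p^{2}}^{p^{s-2}}+\dots+p^{s}c_{p^{s}}=w_{s}\!\big(c_{1},c_{p},c_{p^{2}},\dots,c_{p^{s}}\big),
\]
which lies in $W(p,s,R)$. This would complete the proof.

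The computations are routine; the only points requiring a word of care are (i) that the manipulations of $-t\,g'/g$ are legitimate over an arbitrary commutative ring, which holds because $1-tM$ is invertible in $M_{n}(R[[t]])$ (its determinant has constant term $1$) and the identities used are identities in $R[t]$ or $R[[t]]$, and (ii) the existence over $R$ of the factorisation $g=\prod_{d\ge 1}(1-c_{d}t^{d})$, which the indicated recursion supplies. I do not expect a serious obstacle once one thinks of introducing $g(t)=\det(1-tM)$: the real content is the realisation that, although $\operatorname{Tr}(M^{p^{s}})$ displays no algebraic structure on its own, the characteristic polynomial encodes all the traces $\operatorname{Tr}(M^{m})$ simultaneously, and reading off the $p$-power-indexed part of this encoding is exactly the Witt polynomial $w_{s}$. (An alternative is induction on $s$ from the congruence $\operatorname{Tr}(N^{p})\equiv(\operatorname{Tr}N)^{p}\pmod{pR}$ — itself proved by the $\Z/p\Z$-action cyclically permuting the closed walks of length $p$ occurring in $\operatorname{Tr}(N^{p})$ — together with Proposition~\ref{prop_powerps}; but there the inductive step needs the sharper assertion that $\operatorname{Tr}(M^{p^{s}})-(\operatorname{Tr}M)^{p^{s}}$ lies in $p\,W(p,s-1,R)$, not merely in $pR$, so the Witt-coordinate route is cleaner.)
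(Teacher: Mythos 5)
Your argument is correct: the identity $-t\,g'(t)/g(t)=\sum_{m\ge1}\operatorname{Tr}(M^m)t^m$ for $g(t)=\det(1-tM)$, the recursive factorisation $g=\prod_{d\ge1}(1-c_dt^d)$ in $1+tR[[t]]$, and the resulting formula $\operatorname{Tr}(M^m)=\sum_{d\mid m}d\,c_d^{m/d}$ all hold over an arbitrary commutative ring, and specialising to $m=p^s$ gives exactly the Witt polynomial shape defining $W(p,s,R)$. The paper itself offers no proof --- it imports this as Richman's Proposition 3.2 --- and your derivation is essentially the standard Witt-vector argument underlying that source, so there is nothing to correct.
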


We now concentrate on the special case of our interest: $p = 2, s = 3.$ Note that by the above proposition,
given $M \in M_n(R),$ there exist $x_0, x_1, x_2, x_3 \in R$ such that Tr $\ds M^{2^3} =$  Tr $M^{8} = x_0^{8}+ 2x_1^{4}+ 4x_2^2 + 8x_3.$
Let us define $\ds S_1 = W(2,3,R)({\rm~mod}8R)$ i.e., $S_1$ consists of traces of eighth powers of matrices in $M_n(R),$
read modulo $8R.$ Thus, $S_1=\{x_0^8+2x_1^4+4x_2^2\pmod{8R}\;:\;x_0,x_1,x_2\in R\}.$
It is easy to check (going modulo $8R$) that $S_1$ is closed under addition, using the fact that $W(2, 3, R)$ is
closed under addition already. Noting that $8s = 0,$ for all $s\in S_1,$ we get that $7s$ serves as the
additive inverse of an element $s \in S_1.$

\begin{theorem}\label{thm_tr8}
  Let $R$ be a commutative ring with unity and $n \geq 2$ be an integer.
    Then $M \in M_n(R)$ is a sum of eighth powers in $M_n(R)$ if and only if Tr $M \pmod{8R} \in S_1,$ where 
  $S_1=\{x_0^8+2x_1^4+4x_2^2\pmod{8R}: x_0,x_1,x_2\in R\}.$
\end{theorem}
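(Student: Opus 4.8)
The plan is to mirror the structure already used for sixth powers, exploiting the fact that the hard closure work is essentially done: Richman's $W(2,3,R)$ is known to be closed under addition and subtraction (Proposition \ref{prop_powerps}), and Proposition \ref{prop_traceps} tells us $\mathrm{Tr}\,M^8 \in W(2,3,R)$ for every $M\in M_n(R)$. Reducing modulo $8R$, the set $S_1$ inherits additive closure, and since $8s=0$ in $S_1$ the element $7s$ is the additive inverse of $s$, so $S_1$ is a subgroup of $R/8R$ (equivalently, a subgroup of $R$ containing $8R$). This makes the ``only if'' direction immediate: if $M=\sum_{i=1}^k M_i^8$ then $\mathrm{Tr}\,M \equiv \sum_i \mathrm{Tr}\,M_i^8 \pmod{8R}$ is a finite sum of elements of $S_1$, hence lies in $S_1$.

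For the ``if'' direction I would reduce to the $2\times 2$ case via Theorem \ref{th_mn} / Remark \ref{rem_thm3.5} and Theorem \ref{th_kg}: it suffices to show that whenever $M\in M_2(R)$ has $\mathrm{Tr}\,M\pmod{8R}\in S_1$, then $\mathrm{Tr}\,M$ is a sum of traces of eighth powers of $2\times 2$ matrices, and then invoke $(2)\Rightarrow(1)$ of Theorem \ref{th_kg}. Writing $\mathrm{Tr}\,M = x_0^8 + 2x_1^4 + 4x_2^2 + 8b$ for suitable $x_0,x_1,x_2,b\in R$, I would realize each of the first three summands as (multiples of) traces of explicit companion-type matrices $\begin{pmatrix} t & \delta \\ -1 & 0\end{pmatrix}$, using Corollary \ref{cor_tr} to compute $\mathrm{Tr}\,A^8 = t^8 - 8t^6\delta + 20t^4\delta^2 - 16t^2\delta^3 + 2\delta^4$ and specializing $t,\delta$ (e.g.\ $t=x_0,\delta=0$ gives $x_0^8$; $t=0,\delta$ arbitrary gives $2\delta^4$, which handles the $2x_1^4$ term; and a further specialization plus correction terms handles $4x_2^2$). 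The only genuinely nontrivial piece, exactly as in the sixth-power proof, is showing that every element of the form $8b$ is a sum of traces of eighth powers of $2\times 2$ matrices: one must produce an explicit polynomial identity expressing $8b$ as an integer combination of expressions $t^8 - 8t^6\delta + 20t^4\delta^2 - 16t^2\delta^3 + 2\delta^4$ with $t,\delta$ chosen as low-degree polynomials in $b$, together with constant correction terms that are themselves traces of eighth powers of integer matrices (such as $\mathrm{Tr}\begin{pmatrix}0&1\\-1&0\end{pmatrix}^8 = 2$ and $\mathrm{Tr}\begin{pmatrix}1&0\\-1&0\end{pmatrix}^8 = 1$).

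The main obstacle, therefore, is the bookkeeping for the $8b$ identity: one needs a clever choice of the auxiliary matrices so that the top-degree and middle-degree terms in $b$ cancel, leaving exactly $8b$ plus an integer that can be absorbed by finitely many constant-matrix eighth powers. I expect this to be engineered much as in Theorem \ref{thm_tr6}: pick matrices like $\begin{pmatrix}1 & -b\\ -1 & 0\end{pmatrix}$, $\begin{pmatrix}0 & b\\ -1 & 0\end{pmatrix}$, $\begin{pmatrix}-1 & b^2-1\\ 1 & 0\end{pmatrix}$, $\begin{pmatrix}0 & b^2\\ -1 & 0\end{pmatrix}$, etc., so that the unwanted powers of $b$ telescope modulo the coefficients $2,4,8$, and then verify the resulting polynomial identity by direct expansion (a routine but lengthy check, which I would present as a displayed chain of equalities rather than belabor in prose). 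Once the $8b$ case is in hand, applying it with $b$ replaced by the actual $b$ appearing in $\mathrm{Tr}\,M = x_0^8 + 2x_1^4 + 4x_2^2 + 8b$ completes the proof that $\mathrm{Tr}\,M$ is a sum of traces of eighth powers, and Theorem \ref{th_kg} finishes the argument for all $n\ge 2$.
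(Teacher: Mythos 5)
Your proposal follows essentially the same route as the paper: the ``only if'' direction via Proposition \ref{prop_traceps} and additive closure of $S_1$, reduction to $n=2$ via Remark \ref{rem_thm3.5} and Theorem \ref{th_kg}, the decomposition $\mathrm{Tr}\,M = x_0^8+2x_1^4+4x_2^2+8b$ with $x_0^8$ and $2x_1^4$ realized directly as traces of eighth powers of companion-type matrices, and explicit telescoping identities with constant-matrix corrections for the remaining terms. The only caveat is that the $4x_2^2$ term is not disposed of by ``a further specialization plus correction terms'' as quickly as you suggest --- in the paper it requires its own table of four auxiliary matrices and a chain of cancellations comparable in length to the $8b$ identity --- but this is exactly the kind of bookkeeping you already flag as the main obstacle, so the plan is sound.
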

\begin{proof}
  If $M \in M_n(R)$ is a sum of eighth powers of matrices, by Proposition \ref{prop_traceps} and using
  the fact that $S_1$ is closed under addition, we get that Tr $M \pmod{8R} \in S_1.$ 

  We now prove the converse. Note that by Remark \ref{rem_thm3.5} and using the same observation as in Corollary
  \ref{cor_equaltwoenough}, it is enough to prove the converse for $n = 2.$

  For this assume that $M\in M_2(R)$ is such that Tr $M \pmod{8R} \in S_1.$ Hence there
 exist $a,b,c,d \in R$ such that, 

 \begin{eqnarray} \mbox{Tr }M&=&a^8+2b^4+4c^2+8d \nonumber\\ &=& \mbox{Tr}\begin{pmatrix}	a & 0 \\ 0 & 0	\end{pmatrix}^8+ \mbox{Tr}\begin{pmatrix}	0 & b \\ -1 & 0	\end{pmatrix}^8+ 4c^2+8d \nonumber
	\end{eqnarray}
	Hence it suffices to prove that $4c^2$ and $8d$ are both sums of traces of eighth powers of matrices.

       \begin{itemize}
       \item[(a)]
         We first prove that $4c^2$ is a sum of traces of eighth powers of matrices. For this,
          we write down a few matrices  with the traces of their eighth powers. 
        \begin{table}[H]
   	\centering
	\begin{tabular}{|c|l|l|}\hline
			\multicolumn{2}{|c|}{Matrix $P$}& \hspace{2cm}$\mbox{         Trace } P^8            $  \\ \hline 
		$P_1$ & $\begin{pmatrix}	1 & -c \\ 1 & -1	\end{pmatrix}$ & $2c^4-8c^3+12c^2-8c+2$ \\ \hline
                $P_2$ & $\begin{pmatrix}	1 & -c^2-1 \\ 1 & 0	\end{pmatrix}$ & $2c^8-8c^6-16c^4-8c^2-1$ \\ \hline 
		$P_3$ & $\begin{pmatrix}	1 & -c+1 \\ -1 & 0	\end{pmatrix}$ & $2c^4+8c^3-16c^2+8c-1$ \\ \hline 
		$P_4$ & $\begin{pmatrix}	c-1 & c \\ 1 & 1	\end{pmatrix}$ & $c^8+8c^6+20c^4+16c^2+2$ \\ \hline
        \end{tabular}
        \end{table} \vspace{-1cm}
        Observe with $P_i$ as defined above, we have: 
	\begin{eqnarray}
	4c^2&=& 12c^2-8c^2 \nonumber \\
	&=& (2c^4-8c^3+12c^2-8c+2) +(2c^8-8c^6-16c^4-8c^2-1) \nonumber \\ && -2c^8+8c^6+14c^4+8c^3+8c-1 \nonumber \\ 
	&=& \sum_{i=1}^2 \mbox{Tr }P_i^8 -2c^8+8c^6+12c^4+(2c^4+8c^3-16c^2+8c-1)+16c^2 \nonumber \\
        %&&\ \ \ \ \ \ \ \ \ \ \ \mbox{where } M_1=\begin{pmatrix}	1 & -c \\ 1 & -1	\end{pmatrix} \mbox{ and } M_2=\begin{pmatrix}	1 & -c^2-1 \\ 1 & 0	\end{pmatrix}. \nonumber \\
	&=& \sum_{i=1}^3 \mbox{Tr }P_i^8  -3c^8+(c^8+8c^6+20c^4+16c^2+2) -8c^4-2 \nonumber \\
        %&& \ \ \ \ \ \ \ \ \  \ \ \ \ \ \ \ \ \mbox{where } M_3=\begin{pmatrix} 1 & -c+1 \\ -1 & 0	\end{pmatrix}. \nonumber \\
	&=& \sum_{i=1}^4 \mbox{Tr }P_i^8  -23c^8+(1-8c^4+20c^8-16c^{12}+2c^{16})-3+16c^{12}-2c^{16} \nonumber \\
        %&& \ \ \ \ \ \ \ \ \  \ \ \ \ \ \ \ \ \mbox{where } M_4=\begin{pmatrix} c-1 & c \\ 1 & 1 \end{pmatrix}. \nonumber \\
	&=& \sum_{i=1}^4 \mbox{Tr}P_i^8  +23\mbox{Tr}\begin{pmatrix}	c & c^2 \\ -1 & 0 \end{pmatrix}^8 +\mbox{Tr}\begin{pmatrix}	1 & c^4 \\ -1 & 0	\end{pmatrix}^8+3\mbox{Tr}\begin{pmatrix}	1 & 1 \\ -1 & 0	\end{pmatrix}^8 \nonumber \\ &&+8\mbox{Tr}\begin{pmatrix}	0 & c^3 \\ -1 & 0	\end{pmatrix}^8 +2\mbox{Tr}\begin{pmatrix}	c^2 & c^4 \\ -1 & 0	\end{pmatrix}^8.\nonumber 
	\end{eqnarray}

      \item[(b)] Now we will find matrices in $M_2(R)$, whose sum of the traces of eighth powers is $8d.$ Again we do this
        by writing a table of some useful matrices:

        \begin{tabular}{|c|l|l|}
          \hline
          	\multicolumn{2}{|c|}{Matrix $M$}& \hspace{2cm}$\mbox{         Trace $M^8$   }$  \\ \hline 
		$M_1$ & $\begin{pmatrix}	-d & d+1 \\ -1 & 0	\end{pmatrix}$ & $d^8-8d^7+12d^6+24d^5-26d^4-40d^3-4d^2+8d+2$
                \\ \hline
                $M_2$ & $\begin{pmatrix}	d+1 & -d^2 \\ 1 & 0	\end{pmatrix}$ & $-d^8+8d^7+12d^6-24d^5-30d^4+8d^3+20d^2+8d+1$
                \\ \hline 
		$M_3$ & $\begin{pmatrix}	d+1 & d+1 \\ -1 & 0	\end{pmatrix}$ & $d^8-8d^6-8d^5+12d^4+24d^3+12d^2-1$ \\ \hline 
		$M_4$ & $\begin{pmatrix}	-d+1 & d-1\\ 1 & 0	\end{pmatrix}$ & $d^8-8d^6+8d^5+12d^4-24d^3+12d^2-1$ \\ \hline
                $M_5$ & $\begin{pmatrix}	1 & -d\\ -1 & 0	\end{pmatrix}$ & $ 2d^4+16d^3+20d^2+8d+1$ \\ \hline
		$M_6$ & $\begin{pmatrix}	1 & -d-1\\ -1 & 0	\end{pmatrix}$ & $ 2d^4+24d^3+80d^2+104d+47$ \\ \hline
		$M_7$ & $\begin{pmatrix}	1 & -2d-2\\ 1 & 0	\end{pmatrix}$ & $ 32d^4-112d^2-112d-31$ \\ \hline
		$M_8$ & $\begin{pmatrix}	1 & d+1\\ -1 & 0	\end{pmatrix}$ & $ 2d^4-8d^3-16d^2-8d-1$ \\ \hline
		$M_9$ & $\begin{pmatrix}	1 & -d^2-1\\ 1 & 0	\end{pmatrix}$ & $ 2d^8-8d^6-16d^4-8d^2-1$ \\ \hline
		$M_{10}$ & $\begin{pmatrix}	1 & -d+1\\ -1 & 0	\end{pmatrix}$ & $2d^4+8d^3-16d^2+8d-1$ \\ \hline 
		$M_{11}$   & $\begin{pmatrix}	1 & -d\\ 1 & -1	\end{pmatrix}$ & $2d^4-8d^3+12d^2-8d+2$ \\ \hline
\end{tabular}

        With $M_i$ as above, we have: 
	\begin{eqnarray}
	  8d&=& (d^8-8d^7+12d^6+24d^5-26d^4-40d^3-4d^2+8d+2)\nonumber \\ && \ \ \ \ \ \ \
          -d^8+8d^7-12d^6-24d^5+26d^4+40d^3+4d^2-2 \nonumber \\
	&=& \mbox{Tr }M_1^8+(-d^8+8d^7+12d^6-24d^5-30d^4+8d^3+20d^2+8d+1) \nonumber \\ &&  -24d^6+56d^4+32d^3-16d^2-8d-3 
        \nonumber \\
	&=&  \sum_{i=1}^2 \mbox{Tr }M_i^8+ (d^8-8d^6-8d^5+12d^4+24d^3+12d^2-1) \nonumber \\ &&  -d^8-16d^6+8d^5+ 44d^4+8d^3-28d^2-8d-2 
        \nonumber \\ 
	&=&  \sum_{i=1}^3 \mbox{Tr }M_i^8+ (d^8-8d^6+8d^5+12d^4-24d^3+12d^2-1) \nonumber \\ &&  -2d^8-8d^6+32d^4+32d^3-40d^2-8d-1 
        \nonumber \\ 
	&=&  \sum_{i=1}^4 \mbox{Tr }M_i^8 -2d^8-8d^6+30d^4+(2d^4+16d^3+20d^2+8d+1) \nonumber \\ && + 16d^3-60d^2-16d-2,
        \nonumber \\  
	&=&  \sum_{i=1}^5 \mbox{Tr }M_i^8+ (2d^4+24d^3+80d^2+104d+47) + [(32d^4-112d^2) \nonumber \\
        && -(112d+31)] + (-2d^8-8d^6-4d^4-8d^3-28d^2-8d-18), 
        \nonumber \\ %
	&=&  \sum_{i=1}^7 \mbox{Tr }M_i^8 -2d^8-8d^6 -6d^4+(2d^4-8d^3-16d^2-8d-1)\nonumber \\ 
        && -12d^2-17, \nonumber \\ 
	&=&  \sum_{i=1}^8 \mbox{Tr }M_i^8 -4d^8+(2d^8-8d^6-16d^4-8d^2-1)+10d^4-4d^2-16,
        \nonumber \\  
	&=&  \sum_{i=1}^{9} \mbox{Tr }M_i^8 -4d^8+6d^4+(2d^4+8d^3-16d^2+8d-1)\nonumber \\
	&&+  (2d^4-8d^3+12d^2-8d+2)	-17,
        \nonumber \\
	&=&  \sum_{i=1}^{11} \mbox{Tr }M_i^8 -4d^8+6d^4 -17,
        \nonumber \\ 
	&=&  \sum_{i=1}^{11} \mbox{Tr }M_i^8 +4\mbox{Tr }\begin{pmatrix}	d & d^2\\ -1 & 0\end{pmatrix}^8 + 3\mbox{Tr }\begin{pmatrix}	0 & d\\ -1 & 0	\end{pmatrix}^8 +17\mbox{ Tr } \begin{pmatrix}	1 & 1\\ -1 & 0	\end{pmatrix}^8.\nonumber
	\end{eqnarray}

        \end{itemize} 
\end{proof} 

\section{Discriminant Criteria for a Matrix to be a Sum of Sixth and Eighth Powers of Matrices}
Let $K$ be an algebraic number field and $\mathcal{O}$ be an order of $K$. Here we will find the condition on the discriminant of the order $\mathcal{O}$ so that every matrix over $\mathcal{O}$ is a sum of sixth (eighth) powers of matrices. We will use following lemma: 
\begin{lemma} [\cite{ag}, Theorem 3.2] \label{le_power}
	Let $\mathcal{O}$ be an order in a number field $K$ and $p$ is a prime. The following are equivalent: 
	\begin{enumerate}
	\item Every element of $\mathcal{O}$ is a $p^{th}$ power modulo $p\mathcal{O}.$
        \item $x\in \mathcal{O}, x^p\in p\mathcal{O}$ imply $x\in p\mathcal{O}.$
		\item $(p,\mbox{disc } \mathcal{O})=1.$
	\end{enumerate}
\end{lemma}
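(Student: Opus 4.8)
\noindent\emph{Proof plan.} Everything takes place in the finite $\mathbb{F}_p$-algebra $\bar{\mathcal{O}} := \mathcal{O}/p\mathcal{O}$, so the first step is to rewrite all three conditions there. Since $\mathcal{O}$ is a free $\Z$-module of rank $n=[K:\Q]$, the ring $\bar{\mathcal{O}}$ is an $\mathbb{F}_p$-algebra with $\dim_{\mathbb{F}_p}\bar{\mathcal{O}}=n$, in particular finite, and it has characteristic $p$, so the $p$-th power map $\phi\colon\bar{\mathcal{O}}\to\bar{\mathcal{O}}$, $\phi(\bar x)=\bar x^{\,p}$, is a ring endomorphism (freshman's dream). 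Condition (1) says exactly that $\phi$ is surjective. Condition (2) says that $\bar x^{\,p}=0$ forces $\bar x=0$, i.e. $\ker\phi=\{0\}$; since $\phi$ is additive this is the same as $\phi$ being injective, and since $\bar{\mathcal{O}}$ is finite, $\phi$ injective $\iff$ $\phi$ surjective. This already gives (1) $\iff$ (2). Moreover both are equivalent to $\bar{\mathcal{O}}$ being reduced: if $\bar x\ne 0$ is nilpotent, let $N\ge 2$ be least with $\bar x^{\,N}=0$ and pick $k\ge 1$ with $p^{k-1}<N\le p^k$; then $\bar y:=\bar x^{\,p^{k-1}}\ne 0$ but $\bar y^{\,p}=\bar x^{\,p^k}=0$, so (2) fails; conversely any element killed by $\phi$ is nilpotent, so (2) $\Rightarrow$ reduced.

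The second step is to translate condition (3) into a statement about $\bar{\mathcal{O}}$. Fix a $\Z$-basis $\omega_1,\dots,\omega_n$ of $\mathcal{O}$, so $\operatorname{disc}\mathcal{O}=\det\bigl(\operatorname{Tr}_{K/\Q}(\omega_i\omega_j)\bigr)$; because $\mathcal{O}\otimes_{\Z}\Q=K$, the value $\operatorname{Tr}_{K/\Q}(\alpha)$ for $\alpha\in\mathcal{O}$ is the trace of the multiplication-by-$\alpha$ matrix written in the basis $\{\omega_i\}$. Reducing that matrix modulo $p$ yields the multiplication-by-$\bar\alpha$ matrix on $\bar{\mathcal{O}}$ in the basis $\{\bar\omega_i\}$, so $\operatorname{Tr}_{K/\Q}(\alpha)\bmod p$ is the algebra trace $\operatorname{tr}_{\bar{\mathcal{O}}/\mathbb{F}_p}(\bar\alpha)$. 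Hence $\operatorname{disc}\mathcal{O}\bmod p$ is precisely the discriminant of the symmetric bilinear trace form $(\bar x,\bar y)\mapsto\operatorname{tr}_{\bar{\mathcal{O}}/\mathbb{F}_p}(\bar x\bar y)$ on $\bar{\mathcal{O}}$, and therefore (3), i.e. $p\nmid\operatorname{disc}\mathcal{O}$, is equivalent to this trace form being nondegenerate.

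The third step closes the circle by proving that $\bar{\mathcal{O}}$ is reduced $\iff$ its trace form is nondegenerate. For the contrapositive of one direction, if $\bar x\ne 0$ is nilpotent then $\bar x\bar y$ is nilpotent for every $\bar y$, so multiplication by $\bar x\bar y$ is a nilpotent operator of trace $0$; thus $\bar x$ lies in the radical of the trace form, which is then degenerate. For the converse, a finite reduced commutative ring is Artinian and reduced, hence $\bar{\mathcal{O}}\cong\prod_i\mathbb{F}_{q_i}$ with each $\mathbb{F}_{q_i}$ a finite (hence separable) extension of $\mathbb{F}_p$; each such factor has nondegenerate trace form, and the trace form of the product is the orthogonal direct sum of those of the factors, so it is nondegenerate. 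Combining the three steps gives (1) $\iff$ (2) $\iff$ ``$\bar{\mathcal{O}}$ reduced'' $\iff$ ``trace form nondegenerate'' $\iff$ (3).

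The step I expect to need the most care is the compatibility of traces in the second paragraph: verifying that $\operatorname{Tr}_{K/\Q}$ restricted to $\mathcal{O}$ really reduces modulo $p$ to the $\mathbb{F}_p$-algebra trace on $\bar{\mathcal{O}}$, so that $\operatorname{disc}\mathcal{O}$ genuinely reduces to the discriminant of the $\mathbb{F}_p$-algebra $\bar{\mathcal{O}}$. The structure-theoretic input used in the third step (a finite reduced commutative ring is a product of finite fields, and finite field extensions of $\mathbb{F}_p$ are separable with nondegenerate trace form) is standard and can simply be cited; the remaining manipulations are elementary bookkeeping inside the finite ring $\bar{\mathcal{O}}$.
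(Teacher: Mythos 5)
This lemma is not proved in the paper at all; it is quoted from \cite{ag} (Theorem 3.2) as a known result, so there is no in-paper proof to compare against. Your argument is correct and complete, and it follows the standard route used in that source and in Katre--Khule: pass to the finite $\mathbb{F}_p$-algebra $\mathcal{O}/p\mathcal{O}$, identify (1) and (2) with surjectivity and injectivity of the Frobenius (equivalent by finiteness, and both equivalent to reducedness), and identify $(p,\operatorname{disc}\mathcal{O})=1$ with nondegeneracy of the trace form of $\mathcal{O}/p\mathcal{O}$, which for a finite commutative $\mathbb{F}_p$-algebra is equivalent to being a product of finite fields, i.e.\ reduced. The one step you flagged as delicate --- that $\operatorname{Tr}_{K/\Q}$ restricted to $\mathcal{O}$ reduces mod $p$ to the algebra trace of $\mathcal{O}/p\mathcal{O}$ --- is handled correctly via integrality of the multiplication matrices in a $\Z$-basis, so no gap remains.
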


%In the subsequent theorem, we will prove the converse of the above statement is also true and that is our discriminant criteria for every matrix over $\mathcal{O}$ to be a sum of sixth powers of matrices. 

\begin{theorem}
  Let $\mathcal{O}$ be an order of an algebraic number field and let $n \geq 2$ be an integer. 
  Then every matrix in $M_n(\mathcal{O})$ is a sum of sixth powers in $M_n(\mathcal{O})$ if and only if $(6, \text{ disc }\mathcal{O})=1$.
\end{theorem}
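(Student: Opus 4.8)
The plan is to reduce the assertion to the trace criterion of Theorem~\ref{thm_tr6} and Corollary~\ref{cor_equaltwoenough}, combined with Lemma~\ref{le_power}, so that the whole statement becomes a question about the two finite rings $\mathcal{O}/2\mathcal{O}$ and $\mathcal{O}/3\mathcal{O}$. Throughout I use that the trace map $M_n(\mathcal{O})\to\mathcal{O}$ is surjective (diagonal matrices), and that $6\mathcal{O}$-arithmetic splits, via the Chinese Remainder Theorem, into arithmetic modulo $2\mathcal{O}$ and modulo $3\mathcal{O}$.

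For the ``if'' direction, assume $(6,\mbox{disc }\mathcal{O})=1$, so that $(2,\mbox{disc }\mathcal{O})=1$ and $(3,\mbox{disc }\mathcal{O})=1$; by Lemma~\ref{le_power} every element of $\mathcal{O}$ is then a square modulo $2\mathcal{O}$ and a cube modulo $3\mathcal{O}$. I would first prove that the subgroup $S$ of Lemma~\ref{lem_S_subgp} is all of $\mathcal{O}/6\mathcal{O}$. Given $a\in\mathcal{O}$, choose $x_1\in\mathcal{O}$ with $x_1^3\equiv a\pmod{3\mathcal{O}}$ and $x_2\in\mathcal{O}$ with $x_2^2\equiv a\pmod{2\mathcal{O}}$, and set $w:=-2x_1^3+3x_2^2\in S$ (the case $x_0=0$). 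Modulo $2\mathcal{O}$ the term $2x_1^3$ vanishes and $3x_2^2\equiv x_2^2\equiv a$; modulo $3\mathcal{O}$ the term $3x_2^2$ vanishes and $-2x_1^3\equiv x_1^3\equiv a$; hence $w\equiv a\pmod{6\mathcal{O}}$, so $a\in S$ and $S=\mathcal{O}/6\mathcal{O}$. Consequently every $M\in M_n(\mathcal{O})$ has $\mbox{Tr }M\pmod{6\mathcal{O}}\in S$, and Corollary~\ref{cor_equaltwoenough} (or Theorem~\ref{thm_tr6} when $n=2$) shows $M$ is a sum of sixth powers of matrices.

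For the ``only if'' direction, assume every matrix in $M_n(\mathcal{O})$ is a sum of sixth powers. Since $B^6=(B^3)^2=(B^2)^3$ for any matrix $B$, every matrix in $M_n(\mathcal{O})$ is simultaneously a sum of squares and a sum of cubes; the discriminant criteria for these two cases already recorded in Theorem~\ref{th_dis_23457} (with $k=2$ and with $k=3$) then give $(2,\mbox{disc }\mathcal{O})=1$ and $(3,\mbox{disc }\mathcal{O})=1$, whence $(6,\mbox{disc }\mathcal{O})=1$.

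The one step I expect to need genuine care is justifying the use of Theorem~\ref{th_dis_23457} at the fixed size $n$. For $k=2$ this is harmless: Vaserstein's theorem (\cite{lv}, Theorem~1) together with surjectivity of the trace shows that ``$M\in M_n(\mathcal{O})$ is a sum of squares'' is equivalent to the size-free condition ``every element of $\mathcal{O}/2\mathcal{O}$ is a sum of squares'', which by Lemma~\ref{le_power} is equivalent to $(2,\mbox{disc }\mathcal{O})=1$. For $k=3$ the excerpt records no ready-made size-free criterion, so I would instead argue directly: Newton's identities give $\mbox{Tr }A^3\equiv(\mbox{Tr }A)^3\pmod{3\mathcal{O}}$ for every $A\in M_n(\mathcal{O})$, so the subgroup of $\mathcal{O}$ generated by traces of cubes of $n\times n$ matrices reduces modulo $3\mathcal{O}$ exactly to the image of the Frobenius $x\mapsto x^3$ on $\mathcal{O}/3\mathcal{O}$; by Theorem~\ref{th_kg} and surjectivity of the trace, ``every matrix in $M_n(\mathcal{O})$ is a sum of cubes'' forces that Frobenius to be surjective, hence $(3,\mbox{disc }\mathcal{O})=1$ by Lemma~\ref{le_power}. (Alternatively, Theorem~\ref{th_mn} lets one pass from size $2$ to size $n$ and so reconnect with Theorem~\ref{th_dis_23457}.) Apart from this point, everything is routine congruence bookkeeping.
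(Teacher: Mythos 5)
Your proof is correct, and its skeleton matches the paper's: the ``only if'' direction is exactly the paper's argument ($B^6=(B^3)^2=(B^2)^3$, then Theorem~\ref{th_dis_23457} for $k=2$ and $k=3$), and the ``if'' direction, like the paper's, reduces everything to the trace criterion of Corollary~\ref{cor_equaltwoenough} via Lemma~\ref{le_power}. The one genuine difference is how you verify that $\mbox{Tr }M \pmod{6\mathcal{O}}$ lies in $S$. The paper produces an explicit representative by a nested substitution: it writes $\alpha=x^3-3y$ (cube criterion at $3$), then $x=a^2-2b$ and $y=\alpha^2-2\beta$ (square criterion at $2$), and expands to get $\alpha\equiv a^6-2b^3+3\alpha^2\pmod{6\mathcal{O}}$. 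You instead prove the stronger, cleaner statement that $S=\mathcal{O}/6\mathcal{O}$ by splitting modulo $2$ and modulo $3$ via the Chinese Remainder Theorem (the ideals $2\mathcal{O}$ and $3\mathcal{O}$ are comaximal since $2\cdot 2-3=1$, so $2\mathcal{O}\cap 3\mathcal{O}=6\mathcal{O}$), taking $x_0=0$ and one application of Lemma~\ref{le_power} at each prime; this avoids the algebraic expansion entirely and makes the role of each prime transparent. Your extra care in the ``only if'' direction about applying Theorem~\ref{th_dis_23457} at a fixed matrix size $n$ --- via Vaserstein's theorem for $k=2$ and the Newton-identity congruence $\mbox{Tr }A^3\equiv(\mbox{Tr }A)^3\pmod{3\mathcal{O}}$ together with Theorem~\ref{th_kg} for $k=3$ --- addresses a point the paper silently glosses over, and both of your justifications are sound.
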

\begin{proof}
  If every $M \in M_n(\mathcal{O})$ is a sum of sixth powers of matrices, then clearly every matrix in $M_n(\mathcal{O})$ is a
  sum of squares and also a sum of cubes of matrices in $M_n(\mathcal{O}).$ Therefore by Theorem (\ref{th_dis_23457}), we get $(2, \text{ disc }\mathcal{O})= (3, \text{ disc }\mathcal{O}) = 1.$ Combining we have $(6, \text{ disc }\mathcal{O})= 1.$

  Now assume that $(6, \text{ disc }\mathcal{O})=1.$ Given $M \in M_n(\mathcal{O})$ we have Tr $M,$ say $\alpha$
  is in $\mathcal{O}.$ Since $(3, \mbox{ disc }\mathcal{O})=1$, by Lemma (\ref{le_power})
  $\alpha =x^3-3y$ for some $x,y \in \mathcal{O}.$ Since $(2, \mbox{disc }\mathcal{O})=1,$
  we have that $x=a^2-2b$ and $y=\alpha^2-2\beta$ for some $a,b,\alpha,\beta \in \mathcal{O}.$
  This gives $\alpha=(a^2-2b)^3-3(\alpha^2-2\beta) \equiv a^6-2b^3+3\alpha^2 (\mbox{mod }6\mathcal{O}).$
  Thus Tr $M = \alpha \pmod{6R} \in S$.
  Hence by Corollary \ref{cor_equaltwoenough}, $M$ is a sum of sixth powers of matrices. 
\end{proof}
\begin{theorem}
  Let $\mathcal{O}$ be an order in a number field. Then every matrix over $\mathcal{O}$ is a sum of eighth powers
  in $M_n(\mathcal{O})$ if and only if $(2,\mbox{disc } \mathcal{O})=1.$
\end{theorem}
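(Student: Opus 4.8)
The plan is to follow the template of the sixth-power theorem just proved, with Theorem~\ref{thm_tr6} replaced by the eighth-power trace criterion of Theorem~\ref{thm_tr8} and with Lemma~\ref{le_power}, applied at the prime $p=2$, supplying all the arithmetic about the order. No new machinery beyond the excerpt is needed.

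For the ``only if'' direction, suppose every matrix in $M_n(\mathcal{O})$ is a sum of eighth powers. Since $M^8=(M^4)^2$, any sum of eighth powers is in particular a sum of squares, so every matrix in $M_n(\mathcal{O})$ is a sum of squares; Theorem~\ref{th_dis_23457} with $k=2$ then forces $(2,\mbox{disc }\mathcal{O})=1$.

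For the ``if'' direction, assume $(2,\mbox{disc }\mathcal{O})=1$. By Lemma~\ref{le_power}, every element of $\mathcal{O}$ is a square modulo $2\mathcal{O}$; since the Frobenius $x\mapsto x^2$ is then surjective on the characteristic-$2$ ring $\mathcal{O}/2\mathcal{O}$, so is each of its iterates, and hence every element of $\mathcal{O}$ is an eighth power, a fourth power, and a square modulo $2\mathcal{O}$. Now take $M\in M_n(\mathcal{O})$ and put $\alpha=\mbox{Tr }M\in\mathcal{O}$. Write $\alpha=z^8+2\gamma$ with $\gamma\in\mathcal{O}$, then $\gamma=q^4+2\eta$ with $\eta\in\mathcal{O}$, then $\eta=r^2+2\theta$ with $\theta\in\mathcal{O}$; substituting back gives
\[
\alpha=z^8+2q^4+4r^2+8\theta\equiv z^8+2q^4+4r^2\pmod{8\mathcal{O}},
\]
so $\mbox{Tr }M\pmod{8\mathcal{O}}\in S_1$. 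By Theorem~\ref{thm_tr8}, $M$ is a sum of eighth powers in $M_n(\mathcal{O})$, which completes the equivalence.

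I do not expect a genuine obstacle here: the only points needing care are the propagation of surjectivity from $x\mapsto x^2$ to $x\mapsto x^{2^s}$ on $\mathcal{O}/2\mathcal{O}$ (this is what licenses choosing eighth- and fourth-power representatives modulo $2\mathcal{O}$) and the bookkeeping of the successive factors of $2$ so that the error term lies in $8\mathcal{O}$; both are routine. Alternatively, the ``if'' direction can be phrased as an induction showing $W(2,s,\mathcal{O})=\mathcal{O}$ for all $s\ge 1$ whenever $(2,\mbox{disc }\mathcal{O})=1$, peeling off one factor of $2$ at each step and invoking Propositions~\ref{prop_powerps} and~\ref{prop_traceps}; the present theorem is the case $s=3$.
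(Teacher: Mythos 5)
Your proposal is correct and follows essentially the same route as the paper: the ``only if'' direction via sums of squares and Theorem~\ref{th_dis_23457}, and the ``if'' direction by using Lemma~\ref{le_power} at $p=2$ to peel off successive factors of $2$ from $\mbox{Tr }M$ until the remainder lands in $8\mathcal{O}$, then invoking Theorem~\ref{thm_tr8}. The only cosmetic difference is that you extract eighth- and fourth-power representatives directly from the surjectivity of the iterated Frobenius on $\mathcal{O}/2\mathcal{O}$, whereas the paper substitutes nested square decompositions and expands; both are valid and yield the same congruence $\mbox{Tr }M\equiv z^8+2q^4+4r^2\pmod{8\mathcal{O}}$.
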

\begin{proof}
  If every $M \in M_n(\mathcal{O})$ is a sum of eighth powers in $M_n(\mathcal{O}),$
  it is also a sum of squares in $M_n(\mathcal{O}).$ By Theorem (\ref{th_dis_23457}), we get that 
     $(2, \text{ disc }\mathcal{O}) = 1.$ 
  
  We now assume that $(2,\mbox{disc }\mathcal{O})=1$ and let $M\in M_n(\mathcal{O})$ be arbitrary.
  Then, $\alpha = \mbox{Tr }M \in \mathcal{O}$.
  As $(2,\mbox{ disc } \mathcal{O})=1$, each element of $\mathcal{O}$ is a square modulo $2R$. Hence there exist
  $l,m\in \mathcal{O}$ such that Tr $M=l^2+2m$. By the same argument, write $l=l_0^2+2l_1$ and $m=m_0^2+2m_1,$ to get  
		Tr $M=(l_0^2+2l_1)^2+2(m_0^2+2m_1) = l_0^4+2m_0^2+4(l_0^2l_1+l_1^2+m_1).$ 
	Let $n_0=l_0^2l_1+l_1^2+m_1 \in \mathcal{O}.$ 
	Then Tr $M=l_0^4+2m_0^4+4n_0.$ Again let $l_0=a_0^2+2a_1$ and $m_0=b_0^2+2b_1$, and $n_0=c_0^2+2c_1$, Then,
	Tr $M=(a_0^2+2a_1)^4+2(b_0^2+2b_1)^2+4(c_0^2+2c_1).$ Going modulo eight, 
        $\mbox{Tr }M=a_0^8+2b_0^4+4c_0^2\pmod{8R}.$ 
	Hence Tr $M \pmod{8R}$ is in $S_1$. Therefore by Theorem \eqref{thm_tr8}, $M$ is a sum of eighth powers
        in  $M_n(\mathcal{O}).$ 
\end{proof}

    {\bf Acknowledgment:} The first author would like to sincerely acknowledge the support that he got from his Principal and the motivation which he got from his colleagues Dr. Siby Abraham, Mrs. Latha Mohanan, Dr. Mithu Bhattacharya of Guru Nanak Khalsa College, Mumbai
    throughout the preparation of this paper. The authors would also like to thank Professor R. P. Deore, Head, Department of
    Mathematics, University of Mumbai for his continuous support and guidance.

\noindent {\bf~Corresponding author:} \\
Anuradha S. Garge,\\
Department of Mathematics, University of Mumbai, \\
Kalina Campus, Mumbai-400098, India. \\
Email: anuradha.garge@gmail.com
\end{document}